\theoremstyle{plain}
\newtheorem{thm}{Theorem}
\newtheorem{cor}[thm]{Corollary}
\newtheorem{lem}[thm]{Lemma}
\newtheorem{prop}[thm]{Proposition}
\theoremstyle{remark}
\newtheorem{rem}[thm]{Remark}
\newcommand{\cal}{\mathcal}
\def\hsymbu#1{\smash{\lower1.7ex\hbox{\huge$#1$}}}
\def\rmoveio#1#2{%#2=2 oriented #2=1 non-oriented
\setlength{\unitlength}{#1}
\begin{picture}(50,30)
\put(5,0){\line(0,1){30}}

{\allinethickness{.8pt}
\put(10,15){\vector(1,0){13}}
\put(23,15){\vector(-1,0){13}}}

\qbezier(25,0)(25,20)(40,20)
\qbezier(40,20)(45,20)(45,15)
\qbezier(45,15)(45,10)(40,10)
\qbezier(40,10)(35,10)(31,14)
\qbezier(28,17)(25,25)(25,30)

\ifnum#2=2
\put(3,28){\path(0,0)(2,2)(4,0)}
\put(28,28){\path(0,0)(2,2)(4,0)}
\put(38,15){\makebox{${\Huge c_{1}}$}}
\fi

\end{picture}
}
\def\rmoveiio#1#2{%#2=2 oriented #2=1 non-oriented #2=3 cohearent oriented
\setlength{\unitlength}{#1}
\begin{picture}(60,30)
\put(5,0){\line(0,1){30}}
\put(15,0){\line(0,1){30}}

{\allinethickness{.8pt}
\put(20,15){\vector(1,0){15}}
\put(35,15){\vector(-1,0){15}}}

\qbezier(40,0)(42,1)(47,3)
\qbezier(52,6)(68,15)(52,24)
\qbezier(47,27)(42,30)(40,30)

\qbezier(60,0)(20,15)(60,30)

\ifnum#2=2
\put(2,27){\path(0,0)(3,3)(6,0)}
\put(12,27){\path(0,0)(3,3)(6,0)}
\put(40,27){\path(0,0)(0,3)(3,3)}
\put(60,27){\path(0,0)(0,3)(-3,3)}
\put(47,16){\makebox{${\Huge c_{1}}$}}
\put(47,10){\makebox{${\Huge c_{2}}$}}
\fi

\ifnum#2=3
\put(2,2){\path(0,0)(3,-3)(6,0)}
\put(12,27){\path(0,0)(3,3)(6,0)}
\put(40,3){\path(0,0)(0,-3)(3,-3)}
\put(60,27){\path(0,0)(0,3)(-3,3)}
\put(47,16){\makebox{${\Huge c_{1}}$}}
\put(47,10){\makebox{${\Huge c_{2}}$}}
\fi

\end{picture}
}
\def\rmoveiiio#1#2{%#2=2 oriented #2=1 non-oriented #2=3 cohearent oriented
\setlength{\unitlength}{#1}
\begin{picture}(75,30)
\put(0,0){\line(1,1){15}}
\qbezier(15,15)(20,20)(20,30)

\put(10,0){\line(-1,1){4}}
\qbezier(4,6)(-5,15)(5,25)
\put(5,25){\line(1,1){5}}

\qbezier(20,0)(20,10)(16,14)
\put(14,16){\line(-1,1){8}}
\put(4,26){\line(-1,1){4}}

{\allinethickness{.8pt}
\put(25,15){\vector(1,0){15}}
\put(40,15){\vector(-1,0){15}}}

\qbezier(50,0)(50,10)(55,15)
\put(55,15){\line(1,1){15}}

\put(60,0){\line(1,1){5}}
\qbezier(65,5)(75,15)(66,24)
\put(64,26){\line(-1,1){4}}

\put(70,0){\line(-1,1){4}}
\put(64,6){\line(-1,1){8}}
\qbezier(54,16)(50,20)(50,30)

\ifnum#2=2
\put(0,27){\path(0,0)(0,3)(3,3)}
\put(7,30){\path(0,0)(3,0)(3,-3)}
\put(17,27){\path(0,0)(3,3)(6,0)}

\put(10,23){\makebox{${\Huge c_{1}}$}}
\put(10,3){\makebox{${\Huge c_{2}}$}}
\put(20,13){\makebox{${\Huge c_{3}}$}}

\put(47,27){\path(0,0)(3,3)(6,0)}
\put(60,27){\path(0,0)(0,3)(3,3)}
\put(67,30){\path(0,0)(3,0)(3,-3)}

\put(70,23){\makebox{${\Huge c'_{2}}$}}
\put(70,3){\makebox{${\Huge c'_{1}}$}}
\put(60,13){\makebox{${\Huge c'_{3}}$}}
\fi

\end{picture}
}
\def\rmovevio#1#2{%#2=2 oriented #2=1 non-oriented
\setlength{\unitlength}{#1}
\begin{picture}(50,30)
\put(5,0){\line(0,1){30}}

{\allinethickness{.8pt}
\put(10,15){\vector(1,0){15}}
\put(25,15){\vector(-1,0){15}}}

\qbezier(30,0)(30,20)(45,20)
\qbezier(45,20)(50,20)(50,15)
\qbezier(50,15)(50,10)(45,10)
\qbezier(45,10)(30,10)(30,30)

\put(34,15){\circle{5}}

\ifnum#2=2
\put(3,28){\path(0,0)(2,2)(4,0)}
\put(28,28){\path(0,0)(2,2)(4,0)}
\put(38,15){\makebox{${\Huge c_{1}}$}}
\fi

\end{picture}
}
\def\rmoveviio#1#2{%#2=2 oriented #2=1 non-oriented #2=3 cohearent oriented
\setlength{\unitlength}{#1}
\begin{picture}(60,30)
\put(5,0){\line(0,1){30}}
\put(15,0){\line(0,1){30}}

{\allinethickness{.8pt}
\put(20,15){\vector(1,0){15}}
\put(35,15){\vector(-1,0){15}}}

\qbezier(40,0)(80,15)(40,30)
\qbezier(60,0)(20,15)(60,30)
\put(50,4){\circle{5}}
\put(50,25){\circle{5}}

\ifnum#2=2
\put(2,27){\path(0,0)(3,3)(6,0)}
\put(12,27){\path(0,0)(3,3)(6,0)}
\put(40,27){\path(0,0)(0,3)(3,3)}
\put(60,27){\path(0,0)(0,3)(-3,3)}
\put(47,15){\makebox{${\Huge c_{1}}$}}
\put(47,10){\makebox{${\Huge c_{2}}$}}
\fi

\ifnum#2=3
\put(2,2){\path(0,0)(3,-3)(6,0)}
\put(12,27){\path(0,0)(3,3)(6,0)}
\put(40,3){\path(0,0)(0,-3)(3,-3)}
\put(60,27){\path(0,0)(0,3)(-3,3)}
\put(47,15){\makebox{${\Huge c_{1}}$}}
\put(47,10){\makebox{${\Huge c_{2}}$}}
\fi

\end{picture}
}
\def\rmoveviiio#1#2{%#2=2 oriented #2=1 non-oriented #2=3 cohearent oriented
\setlength{\unitlength}{#1}
\begin{picture}(70,30)
\put(0,0){\line(1,1){15}}
\qbezier(15,15)(20,20)(20,30)

\put(10,0){\line(-1,1){5}}
\qbezier(5,5)(-5,15)(5,25)
\put(5,25){\line(1,1){5}}

\qbezier(20,0)(20,10)(15,15)
\put(15,15){\line(-1,1){15}}
%\put(4,26){\line(-1,1){4}}

\put(5,5){\circle{5}}
\put(15,15){\circle{5}}
\put(5,25){\circle{5}}

{\allinethickness{.8pt}
\put(28,15){\vector(1,0){14}}
\put(42,15){\vector(-1,0){14}}}

\qbezier(50,0)(50,10)(55,15)
\put(55,15){\line(1,1){15}}

\put(60,0){\line(1,1){5}}
\qbezier(65,5)(75,15)(65,25)
\put(65,25){\line(-1,1){5}}

\put(70,0){\line(-1,1){15}}
\qbezier(55,15)(50,20)(50,30)

\put(65,5){\circle{5}}
\put(55,15){\circle{5}}
\put(65,25){\circle{5}}

\ifnum#2=2
\put(0,27){\path(0,0)(0,3)(3,3)}
\put(7,30){\path(0,0)(3,0)(3,-3)}
\put(17,27){\path(0,0)(3,3)(6,0)}

\put(20,13){\makebox{${\Huge c_{1}}$}}

\put(47,27){\path(0,0)(3,3)(6,0)}
\put(60,27){\path(0,0)(0,3)(3,3)}
\put(67,30){\path(0,0)(3,0)(3,-3)}

\put(60,13){\makebox{${\Huge c'_{1}}$}}
\fi

\end{picture}
}
\def\rmovevivo#1#2{%#2=2 oriented #2=1 non-oriented #2=3 cohearent oriented
\setlength{\unitlength}{#1}
\begin{picture}(70,30)
\put(0,0){\line(1,1){15}}
\qbezier(15,15)(20,20)(20,30)

\put(10,0){\line(-1,1){5}}
\qbezier(5,5)(-5,15)(5,25)
\put(5,25){\line(1,1){5}}

\qbezier(20,0)(20,10)(16,14)
\put(14,16){\line(-1,1){14}}
%\put(4,26){\line(-1,1){4}}

\put(5,5){\circle{5}}
\put(5,25){\circle{5}}

{\allinethickness{.8pt}
\put(28,15){\vector(1,0){14}}
\put(42,15){\vector(-1,0){14}}}

\qbezier(50,0)(50,10)(55,15)
\put(55,15){\line(1,1){15}}

\put(60,0){\line(1,1){5}}
\qbezier(65,5)(75,15)(65,25)
\put(65,25){\line(-1,1){5}}

\put(70,0){\line(-1,1){14}}
\qbezier(54,16)(50,20)(50,30)

\put(65,5){\circle{5}}
\put(65,25){\circle{5}}

\ifnum#2=2
\put(0,27){\path(0,0)(0,3)(3,3)}
\put(7,30){\path(0,0)(3,0)(3,-3)}
\put(17,27){\path(0,0)(3,3)(6,0)}

\put(20,13){\makebox{${\Huge c_{1}}$}}

\put(47,27){\path(0,0)(3,3)(6,0)}
\put(60,27){\path(0,0)(0,3)(3,3)}
\put(67,30){\path(0,0)(3,0)(3,-3)}

\put(60,13){\makebox{${\Huge c'_{1}}$}}
\fi

\end{picture}
}
\def\abscrs#1{
\setlength{\unitlength}{#1}
\begin{picture}(75,30)
\allinethickness{1pt}
%plus
\put(0,5){\line(1,1){20}}
\put(0,25){\line(1,-1){8}}
\put(20,5){\line(-1,1){8}}
{\allinethickness{1pt}
\put(25,15){\vector(1,0){10}}
%\put(35,15){\vector(-1,0){10}}
}
{\allinethickness{1pt}
\qbezier(50,3)(55,13)(60,3)
\qbezier(50,27)(55,17)(60,27)
\qbezier(42,10)(52,15)(42,20)
\qbezier(67,10)(57,15)(67,20)
}

{\allinethickness{2pt}
\put(45,5){\line(1,1){20}}
%\put(65,25){\line(-1,-1){7}}
\put(45,25){\line(1,-1){8}}
\put(65,5){\line(-1,1){8}}
}

\end{picture}
}
\def\absvcrs#1{
\setlength{\unitlength}{#1}
\begin{picture}(75,30)
\allinethickness{1pt}
%plus
\put(0,5){\line(1,1){20}}
\put(0,25){\line(1,-1){20}}
%\put(20,5){\line(-1,1){8}}
\put(10,15){\circle{5}}
{\allinethickness{1pt}
\put(25,15){\vector(1,0){10}}
%\put(35,15){\vector(-1,0){10}}
}

\put(40,5){\line(1,1){20}}
\put(40,25){\line(1,-1){10}}
\put(60,5){\line(-1,1){5}}

{\allinethickness{2pt}
\put(45,5){\line(1,1){20}}
\put(45,25){\line(1,-1){7}}
\put(65,5){\line(-1,1){7}}
}

\put(50,5){\line(1,1){20}}
\put(50,25){\line(1,-1){5}}
\put(70,5){\line(-1,1){10}}

\end{picture}
}
\def\abscpi#1{
\setlength{\unitlength}{#1}
\begin{picture}(35,30)
\allinethickness{1pt}
%plus
\put(5,5){\line(0,1){20}}
{%\allinethickness{.8pt}
\put(5,15){\circle*{3}}
}
{\allinethickness{1pt}
\put(12,15){\vector(1,0){8}}
%\put(35,15){\vector(-1,0){10}}
}
{\allinethickness{2pt}
\put(30,5){\line(0,1){20}}
}
{\allinethickness{1pt}
\qbezier(27,5)(27,10)(30,15)
\qbezier(30,15)(33,20)(33,25)
\qbezier(33,5)(33,10)(31,13)
\qbezier(29,17)(27,20)(27,25)
%\qbezier(33,5)(33,10)(30,15)
%\qbezier(30,15)(27,20)(27,25)
}
\end{picture}
}
\def\cpmovei#1{
\setlength{\unitlength}{#1}
\begin{picture}(60,20)
\put(0,0){\line(1,1){20}}
\put(20,0){\line(-1,1){20}}
%\put(9,11){\line(-1,1){9}}
\put(10,10){\circle{5}}
%\put(7,3){\line(-1,1){4}}
%\put(13,3){\line(1,1){4}}
%\put(3,13){\line(1,1){4}}
\put(17,17){\circle*{2}}

{\allinethickness{.8pt}
\put(25,10){\vector(1,0){10}}
\put(35,10){\vector(-1,0){10}}}

\put(40,0){\line(1,1){20}}
\put(60,0){\line(-1,1){20}}
%\put(9,11){\line(-1,1){9}}
\put(50,10){\circle{5}}
\put(43,3){\circle*{2}}
%\put(13,3){\line(1,1){4}}
%\put(3,13){\line(1,1){4}}
%\put(17,13){\line(-1,1){4}}
\end{picture}
}
\def\cpmoveii#1{
\setlength{\unitlength}{#1}
\begin{picture}(40,20)
\put(5,0){\line(0,1){20}}
%\put(5,17){\circle*{1}}%vertex
%\put(5,23){\circle*{1}}%vertex
{%\linethickness{2pt}
\put(5.,7){\circle*{2}} %bar
\put(5.,13){\circle*{2}} }%bar
{\allinethickness{.8pt}
\put(15,10){\vector(1,0){10}}
\put(25,10){\vector(-1,0){10}}}
\put(35,0){\line(0,1){20}}
\end{picture}
}
\def\cpmoveiii#1{%#2=2 oriented #2=1 non-oriented #2=3 cohearent oriented
\setlength{\unitlength}{#1}
\begin{picture}(70,20)

\put(0,0){\line(1,1){20}}
\put(20,0){\line(-1,1){9}}
\put(9,11){\line(-1,1){9}}

\put(3,3){\circle*{2}}
\put(17,3){\circle*{2}}
\put(17,17){\circle*{2}}
\put(3,17){\circle*{2}}

{\allinethickness{.8pt}
\put(25,10){\vector(1,0){10}}
\put(35,10){\vector(-1,0){10}}}

\put(40,0){\line(1,1){20}}
\put(60,0){\line(-1,1){9}}
\put(49,11){\line(-1,1){9}}

\end{picture}
}
\begin{document}
%% title %%%%%%%%%%%%%%%%%%%%%%%%%%%%%%
%\begin{frontmatter}
\title[Coherent double coverings of  virtual link diagrams]
{Coherent double coverings of  virtual link diagrams}
\author{Naoko Kamada} %and Seiichi Kamada
\thanks{This work was supported by JSPS KAKENHI Grant Number 15K04879.}
%, 26287013.}
\address{ Graduate School of Natural Sciences,  Nagoya City University\\ 
1 Yamanohata, Mizuho-cho, Mizuho-ku, Nagoya, Aichi 467-8501 Japan
%Department of Mathematics, Osaka City University, \\
%Suimiyoshi,  Osaka 558-8585, Japan
}

\date{}

\begin{abstract} 
A virtual link diagram is called normal if the associated abstract link diagram is checkerboard colorable, and a virtual link is normal if it has a normal diagram as a representative. 
Normal virtual links have some properties similar to classical links.
%Virtual knot theory is a generalization of knot theory which is based on  Gauss chord diagrams and link diagrams on closed oriented surfaces. 
%A twisted knot is a generalization of a virtual knot, which corresponds to a link diagram on a possibly non-orientable surface. 
%A normal virtual link is presented by a virtual link diagram which is similar to a classical link diagram. The set of classical link diagrams is a subset of the set of normal virtual link diagrams.
In  this paper, we introduce a method of converting a virtual link diagram to a normal virtual link diagram. 
We show that the normal virtual link diagrams obtained by this method from two equivalent virtual link diagrams are equivalent.
 We discuss the relationship between this 
method and some invariants of virtual links.

\end{abstract}
\maketitle
%\begin{keyword}
%Virtual knot theory \sep Miyazawa polynomial
%\MSC Primary 57M25 \sep Secondary 57M27.
%\end{keyword}
%\end{frontmatter}

\section{Introduction}

%%%  1 virtual Link diagram%%%%%%%%%%%%%%%%%%%%%%%%
%L. H. Kauffman  \cite{rkauD} introduced virtual knot theory, which is a generalization of knot theory based on  Gauss  diagrams and link diagrams in closed oriented surfaces. 
Virtual links correspond to  stable equivalence classes of  links in thickened surfaces \cite{rCKS,rkk}. 
%Twisted knot  theory was introduced by Bourgoin. It is an extension of virtual knot theory. Twisted links correspond to stable equivalence classes of links in oriented 3-manifolds which are line bundles over (possibly non-orientable) closed surfaces  \cite{rBor,rCKS}. 
A virtual link diagram is called normal if the associated abstract link diagram is checkerboard colorable ($\S$ \ref{secnormal}). A virtual link is called normal or checkerboard colorable if it has a normal diagram as a representative. Every classical link diagram is normal, and hence 
the set of classical link diagrams is a subset of that of normal virtual link diagrams. The set of normal virtual link diagrams is a subset of that of virtual link diagrams. 
Jones polynomial is extend to virtual links \cite{rkauD}. It is shown in \cite{rkn0} that Jones polynomial of a normal virtual link has a property that Jones polynomial of a classical link has. 
For this property Khovanov homology is extended to normal virtual links in a natural way as stated in O. Viro \cite{rviro}.
The author introduced the method of  converting a virtual link diagram to a normal virtual link diagram by use of  the double covering technique in \cite{rkn2}. In this method, two converting virtual link diagrams obtained from two equivalent virtual link diagrams are related by generalized Riedemeister moves and K-flypes (See \cite{rkn2}). 

In  this paper, we introduce another method of converting a virtual link diagram to a normal virtual link diagram, which is called the coherent double covering technique. 
We show that two normal virtual link diagrams obtained from two equivalent virtual link diagrams by our method 
are equivalent as a virtual link.  In Section~\ref{sec:relationship}, we discuss the relationship between our method and some invariants of virtual links, as the odd writhe, the invariant of even ordered virtual link which is introduced by H. Miyazawa, K. Wada and Y. Yasuhara\cite{rMWY}.

%%%%%%%%%%%%%%%%%%%%%%%%%%%%%%%%
\section{Definitions and main results}\label{secnormal} 
A {\it virtual link diagram\/} is a generically immersed, closed and oriented 1-manifold in $\mathbb{R}^2$ with information of positive, negative or virtual crossing, on each double point.  A {\it virtual crossing\/} is an encircled double point without over-under information \cite{rkauD}. 
%A {\it twisted link diagram\/} is a virtual link diagram, possibly with {\it bars\/} on arcs \cite{rBor}. 
%Examples of a virtual link diagram and a twisted link  diagram are depicted in Figure~\ref{fig:extwtdiag} (i) and (ii), respectively.
%
%\begin{figure}[H]
%\centerline{
%\includegraphics[width=5cm]{ExTwist.eps}\hspace{1.2cm}
%\includegraphics[width=5cm]{Exabst.eps}}
%\centerline{(i)\hspace{2.3cm}(ii)\hspace{3cm}(iii)\hspace{2.3cm}(iv)}
%\caption{Examples of twisted link diagrams and abstract link diagrams}\label{fig:extwtdiag}
%\end{figure}
%
A {\it virtual link\/}  is an equivalence class of virtual link diagrams under Reidemeister moves and virtual Reidemeister moves depicted in Figure~ \ref{fgmoves}. We call Reidemeister moves and virtual Reidemeister moves  {\it generalized Reidemeister moves}. 
\begin{figure}[h]
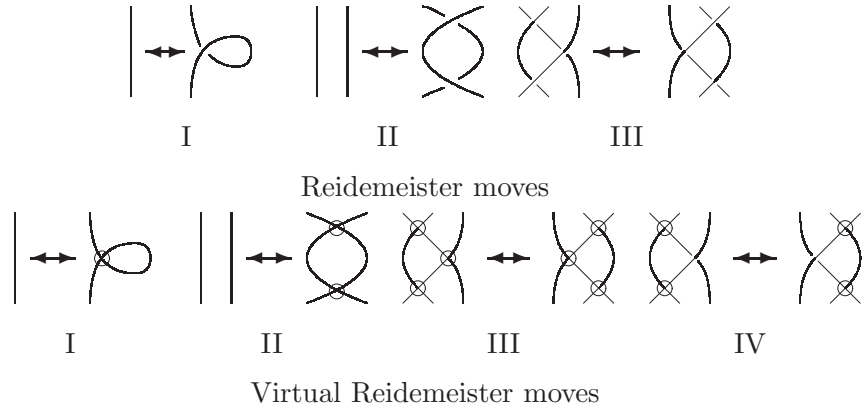

\centerline{
\begin{tabular}{ccc}
\rmoveio{.4mm}{1}&\rmoveiio{.4mm}{1}&\rmoveiiio{.4mm}{1}\\
I&II&III\\
\multicolumn{3}{c}{Reidemeister moves}
\end{tabular}}
\centerline{
\begin{tabular}{cccc}
\rmovevio{.4mm}{1}&\rmoveviio{.4mm}{1}&\rmoveviiio{.4mm}{1}&\rmovevivo{.4mm}{1}\\
I&II&III&IV\\
\multicolumn{4}{c}{Virtual Reidemeister moves}
\end{tabular}}
%\centerline{
%\begin{tabular}{ccc}
%\rmovetti{.5mm}&\rmovetiio{.5mm}&\rmovetiiio{.5mm}{1}\\
%I&II&III\\
%%\multicolumn{3}{c}{Twisted Reidemeister moves}
%\end{tabular}}
\caption{Generalized Reidemeister moves}\label{fgmoves}
\end{figure}

An {\it abstract link diagram} ({\it ALD})  is a pair $(\Sigma, D)$ 
of  a compact surface $\Sigma$ and a link diagram $D$ on $\Sigma$ such that the underlying 4-valent graph 
$|D|$ is a deformation retract of $\Sigma$. 

%
%\begin{figure}[h]
%\centerline{
%\includegraphics[width=6cm]{Exabst.eps}
%}
%\centerline{(i)\hspace{3cm}(ii)}
%\caption{Example of an ALD}\label{fg:ExALD}
%\end{figure}
%
We obtain an ALD from a virtual link diagram $D$ by corresponding a diagram to an ALD as in Figure~\ref{fg:virabs} (i) and (ii).
Such an  ALD is called the {\it ALD associated with} $D$.
Figure~\ref{fig:extwtdiag} shows a virtual link diagram and the ALD associated with it. 
For details on abstract link diagrams and their relations to virtual links, refer to \cite{rkk}.
\begin{figure}[h]
\centerline{
\begin{tabular}{ccc}
\abscrs{.5mm}&\absvcrs{.5mm}&\abscpi{.5mm}\\
(i)&(ii)&(iii)
\end{tabular}
}
\caption{The correspondence from a virtual link diagrams to an ALD}\label{fg:virabs}
\end{figure}
\begin{figure}[h]
\centerline{
\begin{tabular}{cc}
\includegraphics[width=2.3cm]{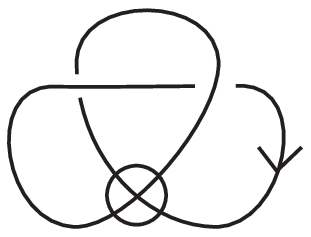}&
\includegraphics[width=2.3cm]{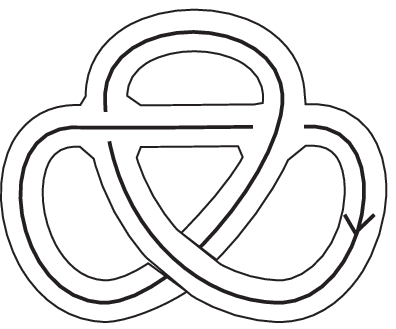}\\
(i)&(ii)
%&(iii)&(iv)\\
\end{tabular}}
\caption{A virtual link diagram and  an ALD}\label{fig:extwtdiag}
\end{figure}
Let $D$ be a virtual link diagram and $(\Sigma, D_{\Sigma})$ the ALD associated with $D$. 
The diagram $D$ is said to be {\it normal} or {\it checkerboard colorable} if the regions of $\Sigma-|D_{\Sigma}|$ can be colored black and white such that colors of two adjacent regions are different. In Figure~\ref{fig:exccdiag}, we show an example of a normal diagram. (The orientations of the diagrams in Figure~\ref{fig:exccdiag} are alternate orientations, which are discussed later.) A classical link diagram is normal. A virtual link is said to be {\it normal} if it has a normal virtual link diagram. Note that normality is not necessary to be 
preserved  under generalized Reidemeister moves. 
For example the virtual link diagram in the right of Figure~\ref{fgexdiag} is not normal and is equivalent to the trefoil knot diagram in the left which is normal. 
\begin{figure}[h]
\centerline{
\includegraphics[width=7cm]{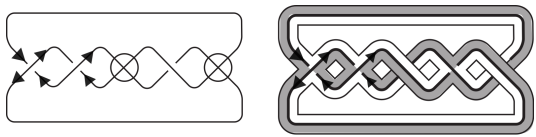}}
\caption{A normal twisted link  diagram and its associated ALD with a checkerboard coloring}\label{fig:exccdiag}
\end{figure}
\begin{figure}[h]
\centerline{
\includegraphics[width=5cm]{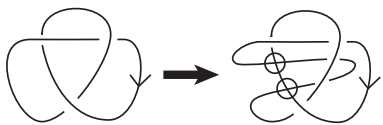}}
\caption{A diagram of a normal virtual link which is not normal}\label{fgexdiag}
\end{figure}

%We recall from \cite{rkk7} the double covering diagram of a twisted link diagram. Let $D$ be a twisted link diagram. 
H. Dye introduced the notion of cut points on a virtual link diagram in \cite{rDye1}.
%her talk presented in the Special Session 35, ``Low Dimensional Topology and Its Relationships with Physics", held in Porto, Portugal, June 10-13, 2015 as part of the 1st AMS/EMS/SPM Meeting. 

Let $(D, P)$ be a pair of a virtual link diagram $D$ and a finite set $P$ of points on edges of $D$. 
We obtain an  ALD from $(D, P)$ as in Figure \ref{fg:virabs} (i), (ii) and (iii) and  call it the {\it ALD associated with} $(D, P)$.
 See Figure~\ref{fgcutpt} (ii) and (iii). 
If the ALD associated with $(D, P)$ is normal,  then we call the set of points $P$  a {\it cut system} of $D$ and call each point of $P$ a {\it cut point}.
%Such a normal ALD is said to be a {\it normal ALD} associated with $(D, P)$. 
For  the virtual link diagram  in Figure~\ref{fgcutpt} (i) we show an example of a cut system  in Figure~\ref{fgcutpt} (ii)  and the  ALD associated with it with a checkerboard coloring in Figure~\ref{fgcutpt} (iii). 

%
%\begin{figure}[h]
%\centerline{
%\begin{tabular}{ccc}
%\abscrs{.4mm}&\absvcrs{.4mm}&\abscpi{.4mm}
%\end{tabular}
%}
%\caption{The correspondence from virtual link diagrams to ALD}\label{fg:virptabs}
%\end{figure}
%

%
\begin{figure}[h]
\centerline{
\includegraphics[width=8cm]{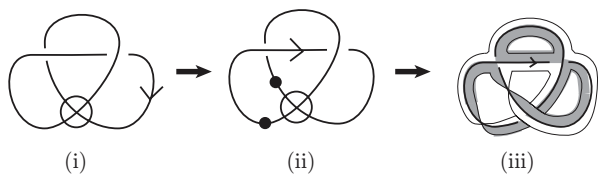}}
\caption{Example of cut points}\label{fgcutpt}
\end{figure}

A virtual link  diagram is said to admit an {\it alternate orientation} if it can be given an orientation  such that an  orientation of an edge switches at each classical crossing  as in Figure~\ref{fgalternateorip}. 
(Here an edge means an edge of $|D|$, where endpoints are classical crossings and there might be some virtual crossings on it.) 
The virtual link diagram in Figure~\ref{fig:exccdiag} admits an alternate orientation. 
It is known that a virtual link diagram is normal if and only if it admits an alternate orientation \cite{rkns}. 
%
%\begin{figure}[h]
%\centerline{
%\includegraphics[width=5cm]{altorientationv.eps}
%}
%\caption{Alternate orientation}\label{fig:altori}
%\end{figure}
%
%\begin{prop}{\cite{rkn1}}\label{prop1}
%Let $D$ be a twisted link diagram. $D$ admits an alternate orientation if and only if $D$ is normal.
%\end{prop}
%
\begin{figure}[h]
\centerline{
\includegraphics[width=6cm]{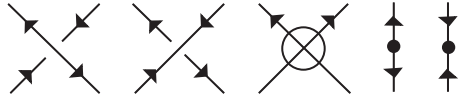}}
\caption{Alternate orientation}\label{fgalternateorip}
\end{figure}
Note that a finite set $P$ of points on $D$ is a cut system if and only if $(D,P)$ admits an alternate orientation such that the orientations  of edges are as in Figure~\ref{fgalternateorip} at each classical crossing of $D$ and each point of $P$ (cf. \cite{rkn2,rkns}).

The {\it canonical cut system} of a virtual link diagram $D$  is the set of points 
 that is obtained by 
giving two points in a neighborhood of each virtual crossing of $D$ as in Figure~\ref{fgVtoCC} (i). 
\begin{figure}[h]
\centerline{
\includegraphics[width=1.5cm]{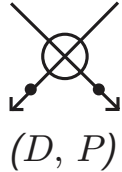}}
%\centerline{$D$\hspace{1cm}$(D,P)$\hspace{1cm}\parbox{1.5cm}{\small an ALD associated with $D_C$}\hspace{.5cm}\parbox{1.5cm}{\small an ALD associated with $(D,P)$}}
\caption{The canonical cut system of a virtual link diagram}\label{fgVtoCC}
\end{figure}
\begin{prop}[\cite{rkn2}]\label{propcan}
The canonical cut system is a cut system.
\end{prop}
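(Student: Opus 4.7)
The plan is to verify the alternate-orientation criterion noted just before the statement. By that criterion, $P_{\mathrm{can}}$ is a cut system if and only if $(D,P_{\mathrm{can}})$ admits an alternate orientation, namely an orientation of the arcs of $|D|\cup P_{\mathrm{can}}$ that reverses at every classical crossing and every cut point. Having fixed a direction on one arc per component, I would propagate along the component, flipping at each classical crossing and each cut point encountered, and leaving the direction unchanged through each virtual crossing. Consistency of the propagation is equivalent to the parity condition: on each component $C$ of $D$, the total number of flips accumulated along one full loop of $C$ is even.

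For the canonical cut system I would then count these flips for a fixed component $C$. The number of passages of $C$ through classical crossings is $2 s_{\mathrm{cl}}(C) + m_{\mathrm{cl}}(C)$, where $s_{\mathrm{cl}}(C)$ counts classical self-crossings of $C$ and $m_{\mathrm{cl}}(C)$ counts classical crossings between $C$ and the other components of $D$. Since, as indicated in Figure~\ref{fgVtoCC}(i), the canonical cut system places exactly one cut point on each of the two strands at every virtual crossing, the analogous count of cut points lying on $C$ is $2 s_{\mathrm{vir}}(C) + m_{\mathrm{vir}}(C)$. Adding these two contributions, the total flip count reduces modulo $2$ to $m_{\mathrm{cl}}(C) + m_{\mathrm{vir}}(C)$, which equals the total number of crossings (classical or virtual) in $D$ between $C$ and the remaining components.

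The topological input I would invoke is that any two transversely meeting closed immersed curves $\gamma,\gamma'$ in $\mathbb{R}^2$ satisfy $|\gamma\cap\gamma'|$ even, since $H_1(\mathbb{R}^2;\mathbb{Z}/2)=0$ forces the mod-$2$ intersection pairing to vanish; equivalently, $\gamma$ bounds a $2$-chain modulo $2$, so its mod-$2$ intersection with any cycle is zero. Applied with $\gamma=C$ and $\gamma'$ the union of the remaining components of $D$, this yields that $m_{\mathrm{cl}}(C)+m_{\mathrm{vir}}(C)$ is even, so the flip count on $C$ is even, the alternate orientation propagates consistently, and $P_{\mathrm{can}}$ is a cut system. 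I expect the only delicate step to be the bookkeeping in the second paragraph, confirming from Figure~\ref{fgVtoCC}(i) that each of the two strands at each virtual crossing carries precisely one cut point; once that is pinned down, the parity computation is immediate and the homological fact finishes the argument.
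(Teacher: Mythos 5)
The paper gives no proof of this proposition---it is quoted from \cite{rkn2}---so your argument has to stand on its own, and it does not. The decisive step is your claim that consistency of the orientation propagation is \emph{equivalent} to the per-component parity condition, and that claim is false. An alternate orientation must satisfy a local condition at each classical crossing that couples the two transverse strands: the four edge-ends must alternate toward/away cyclically around the crossing, i.e.\ both edges of one strand point into the crossing while both edges of the other point out (Figure~\ref{fgalternateorip}). Propagating along a component and flipping at each classical crossing and cut point only enforces the ``each strand flips'' part; when the walk returns to a classical crossing whose transverse strand has already been oriented, you must additionally check the relative orientation of the two strands, and an even flip count around each component does not guarantee this. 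Concretely, the standard virtual trefoil diagram (two classical crossings, one virtual crossing) equipped with the \emph{empty} set of points passes your test---its single component accumulates four flips, an even number---yet it admits no alternate orientation, since its odd writhe is $\pm 2$ while every normal diagram has odd writhe zero. Your criterion would therefore certify $\emptyset$ as a cut system for that diagram, which is wrong: the parity you verify is necessary but not sufficient, and your (correct) homological observation that distinct components of a planar diagram meet evenly ends up carrying no real weight.

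Your reading of Figure~\ref{fgVtoCC} is right---the canonical cut system puts one point on each of the two strands at every virtual crossing (if both points sat on one strand, cut point moves I and II would cancel them and every diagram would be normal)---but this placement has to be used locally, not just through a global count. A correct route: the underlying immersed closed curves of $D$, with virtual crossings regarded as ordinary transverse double points, divide $\mathbb{R}^2$ into regions that are checkerboard colorable via the mod $2$ winding number. Transport this coloring to the ALD of $(D,P_0)$: it is compatible along every band and at every classical crossing, while at a virtual crossing the two bands of the ALD are disjoint and the planar coloring forces the color on a given side of each band to switch as that band passes the crossing---which is exactly the discrepancy that the cut point on that band repairs, since a cut point interchanges the two boundary regions of the band (Figure~\ref{fg:virabs}~(iii)). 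That argument is local and needs no parity bookkeeping at all.
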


%\begin{proof}
%For a virtual link diagram $D$, let $D_C$ be a classical link diagram which is obtained from $D$ by replaceing all virtual crossings of $D$ with classical ones. Note that there is a checkerboard coloring for the ALD %associated with $D_C$. At each classical crossing, the checkerboard coloring is as in Figure~\ref{fgVtoCC} (ii). 
%Let $P$ be the canonical cut system of $D$. 
%The ALD associated with $(D, P)$  is checkerboard colorable such that its coloring is inherited from that of $D_C$ as in Figure~\ref{fgVtoCC} (ii) and (iii).
%\end{proof}

Dye introduced cut point moves  depicted in Figure~\ref{fgcutptmove}.
Then we have the following.

\begin{figure}[h]
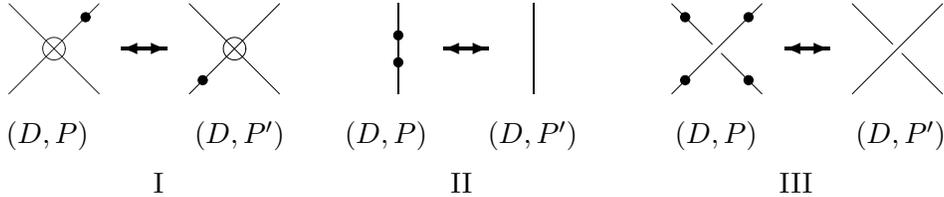

\centerline{
\begin{tabular}{ccc}
\cpmovei{.6mm}\quad\quad&\cpmoveii{.6mm}&\quad\quad\cpmoveiii{.6mm}\\
$(D,P)$\hspace{1.4cm}$(D, P')$\phantom{M}&$(D,P)$\hspace{.8cm}$(D, P')$&\quad$(D,P)$\hspace{1.3cm}$(D, P')$\\
I&II&III\\
%(i)&(ii)&(iii)\\
\end{tabular}
}
\caption{Cut point moves}\label{fgcutptmove}
\end{figure}
\begin{thm}[\cite{rkn2}]\label{thmc1}
For a virtual link diagram $D$, two cut systems of  $D$ are related by a sequence of cut point moves I, II and III.
\end{thm}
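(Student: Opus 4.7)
\medskip

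\noindent\textbf{Proof proposal.} The plan is to define a $\mathbb{Z}/2$-valued invariant $n(P) \colon E \to \mathbb{Z}/2$ of a cut system $P$, where $E$ is the set of edges of $|D|$ (arcs between classical crossings), recording the parity of the number of cut points on each edge; then to show (i) Moves I and II preserve $n(P)$ and can normalize $P$ to have at most one cut point per edge, and (ii) the set of $n$ arising from cut systems forms a single coset of the coboundary image, which is exactly what Move III generates.

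First I would carry out the normalization. Move I permits sliding a cut point along its strand past any virtual crossing, so by iterated application of Move I all cut points on a fixed edge of $|D|$ can be collected onto a single arc segment between two consecutive virtual crossings (or between a virtual crossing and a classical crossing). Move II then cancels adjacent pairs. Hence, modulo Moves I and II, every cut system $P$ is equivalent to one with at most one cut point on each edge of $|D|$, and the resulting configuration is determined by the parity function $n(P)\colon E \to \mathbb{Z}/2$. Two cut systems $P_1,P_2$ with $n(P_1)=n(P_2)$ are therefore related by Moves I and II alone.

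Next I would set up the cohomological framework. Using the characterization that $(D,P)$ admits an alternate orientation (Figure~\ref{fgalternateorip}) at each classical crossing and each cut point, the choice of alternate orientation is encoded by a sign $s(c)\in\mathbb{Z}/2$ at each classical crossing $c\in V$ (the two locally valid alternate orientations at $c$ differ by a global flip). Along an edge $e$ with endpoints $c_1,c_2$ the prescribed pattern of Figure~\ref{fgalternateorip} fixes the parity relation
\[
s(c_1)+s(c_2)\;=\;n(P)(e)+C(e)\pmod 2,
\]
where $C(e)\in\mathbb{Z}/2$ is a constant determined by $D$ alone (from the original orientation of the two half-edges of $e$). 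Consequently $n(P)+C$ lies in the image of the coboundary $\delta\colon C^0(V;\mathbb{Z}/2)\to C^1(E;\mathbb{Z}/2)$, and since at least one cut system exists (the canonical one, Proposition~\ref{propcan}), the set of $n(P)$ arising from cut systems is exactly a single coset of $\operatorname{im}\delta$.

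Finally, Move III is the geometric incarnation of adding the generator $\delta\chi_c$ for a single classical crossing $c$: the four cut points in Figure~\ref{fgcutptmove}(III) change $n$ by $1$ on each of the (at most) four edges incident to $c$, matching the coboundary (and with the right parity when an edge is a loop at $c$, since then Move III contributes two cut points to that edge and $n$ is unchanged modulo~$2$). Thus, given two cut systems $P_1,P_2$, their parity functions $n(P_1)$ and $n(P_2)$ lie in the same coset, so $n(P_1)-n(P_2)\in\operatorname{im}\delta$ and can be realized by a sequence of Move IIIs; combined with the normalization step, this produces a sequence of Moves I, II, III taking $P_1$ to $P_2$. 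The main obstacle I expect is pinning down the constant $C(e)$ and verifying the coboundary computation carefully at edges that are loops or that meet the same crossing twice; everything else is bookkeeping once the cochain setup is in place.
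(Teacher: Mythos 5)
Your argument is correct: normalizing with Moves I and II reduces a cut system to its edge-parity function $n(P)\colon E\to\mathbb{Z}/2$, the alternate-orientation characterization shows that the parity functions of cut systems form a single coset of the image of the coboundary $\delta\colon C^0(V;\mathbb{Z}/2)\to C^1(E;\mathbb{Z}/2)$, and Move III realizes exactly the generators $\delta\chi_c$ of that image (with loops at a crossing correctly contributing $0$ mod $2$, as you note). The present paper states this theorem without proof, citing \cite{rkn2}; your cohomological packaging is essentially the same parity/alternate-orientation argument used there, just phrased in cochain language.
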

%
%\begin{proof}%[Proof of Theorem~\ref{thmc1}]
%Let $P$ and $P'$ be two cut systems of $D$. 
%For $(D,P)$ and $(D,P')$, give  alternate orientations ${\cal O}$ and ${\cal O}'$, respectively. 
%Let $c_1, c_2, \dots , c_m$ be classical crossings of $D$ where the orientations of edges of $\cal O$ are different from those of 
%$\cal O'$. 
%Apply cut point moves III  at $c_1, c_2, \dots , c_m$ to $(D,P)$, then we obtain a cut system $P''$ of $D$. 
%There is an alternate orientation of $(D,P'')$, say $\cal O''$,  such that each classical crossing in $D$ admits the same 
%orientation to that of $\cal O'$. 
%Applying some cut point moves I to $P''$, we have a cut system $P'''$ such that for each edge $e$ of $D$, the number of cut points on $e$ in $(D,P''')$ is congruent to that in $(D,P')$ modulo $2$. 
%Note that, for each edge $e$ of $|D|$, the number of cut points of $P''$ on $e$ is congruent to that of $P'$ modulo $2$. 
%Thus $(D,P')$ is obtained from $(D,P'')$ by cut point moves I and II. 
%\end{proof}

\begin{cor}[\cite{rDye1}, c.f. \cite{rkn2}]\label{cor1}
For any virtual link diagram with a cut system,  the number of cut points is 
even.
\end{cor}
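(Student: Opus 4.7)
The plan is to exploit the characterization stated just before the corollary: a finite set $P$ of points on $D$ is a cut system if and only if $(D,P)$ admits an alternate orientation, i.e.\ an orientation of each sub-arc (obtained from the edges of $|D|$ by subdividing at the points of $P$) such that the orientation reverses at every classical crossing of $D$ and at every point of $P$, in the pattern of Figure~\ref{fgalternateorip}. Given this, the corollary will follow from a straightforward parity count performed component by component.

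Fix an alternate orientation for $(D,P)$ and consider a single component $C$ of the underlying $1$-manifold of $D$. Traverse $C$ once, from some basepoint back to itself, following the parametrization of the component (ignoring virtual crossings, which are irrelevant to the alternate orientation). Along the way we pass through three kinds of events where the orientation of the current sub-arc must flip: (a) each point of $P$ lying on $C$, visited once; (b) each classical crossing between $C$ and a \emph{different} component, visited once as a strand of $C$; and (c) each self-crossing of $C$, visited twice. Since the alternate orientation must return to its original value after one full traversal, the total number of flips along $C$ must be even. Self-crossings contribute an even number, so we obtain the congruence
\[
    m_C + |P\cap C| \equiv 0 \pmod{2},
\]
where $m_C$ denotes the number of classical crossings of $D$ that involve $C$ and one other component.

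Summing this over all components of $D$, the total $\sum_C m_C$ counts each inter-component classical crossing exactly twice (once from each of the two components at that crossing), and is therefore even. Consequently $\sum_C |P\cap C| = |P|$ is even, which is the desired conclusion.

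The argument is essentially bookkeeping; the only point requiring care is the distinction between self-crossings and inter-component crossings when tallying orientation flips on a single component, and the observation that the inter-component contribution vanishes modulo $2$ when summed over all components. No appeal to Theorem~\ref{thmc1} or to the canonical cut system construction of Proposition~\ref{propcan} is needed, though one could alternatively derive the result by reducing to the canonical case via cut point moves and checking that each of Dye's moves in Figure~\ref{fgcutptmove} preserves the parity of $|P|$.
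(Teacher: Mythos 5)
Your argument is correct, but it takes a genuinely different route from the one the paper intends. The corollary is stated immediately after Proposition~\ref{propcan} and Theorem~\ref{thmc1}, and the intended proof is the one you only mention in passing at the end: the canonical cut system has exactly two cut points per virtual crossing, hence an even number of them; by Theorem~\ref{thmc1} any cut system of $D$ is obtained from the canonical one by cut point moves I, II, III; and these moves change the number of cut points by $0$, $\pm 2$ and $\pm 4$ respectively, so the parity is preserved. Your proof instead works directly from the characterization of cut systems by alternate orientations stated just before the corollary: on each component the orientation must flip an even number of times in one full traversal, self-crossings contribute two flips each, and the inter-component crossing contributions cancel modulo $2$ when summed over all components, leaving $|P|$ even. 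This is self-contained --- it needs neither the existence of the canonical cut system nor the (nontrivial) classification of cut systems up to moves --- and it proves slightly more, namely the component-wise congruence $m_C + |P\cap C| \equiv 0 \pmod{2}$, from which the paper's later proposition (that each component of an even virtual link diagram carries an even number of cut points) also follows at once. The paper's route is shorter given that Theorem~\ref{thmc1} is already available; yours is the more elementary and more informative argument.
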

%\begin{proof}
%The number of cut points of  the canonical cut system is even. Since cut point moves do not change the parity of the number of cut points, we obtain the result.
%\end{proof}

Let $D$ be a virtual link diagram with a cut system $P$. 
Assume that $(D, P)$ is on the right of the $y$-axis in the $xy$-plane and all crossings and cut points have distinct $y$-coordinates.  Let  $ (D^*, P^*)$ be a copy of $(D,P)$ on the left of the $y$-axis in the $xy$-plane which is obtained from $(D,P)$ by sliding along the $x$-axis. Let $\{p_1,\dots, p_k\}$ be the set of cut points of $P$ and for $i\in\{1,\dots, k\}$, we denote  by $p_i^*$ the cut point of $D^*$ corresponding to $p_i$.
%The  cut system of $D^*$ is obtained from $P$ by the reflection. We denote it by $P^*$.
See Figure~\ref{fgconvert1} (i). 
\begin{figure}[h]
\centering{
\includegraphics[width=5.5cm]{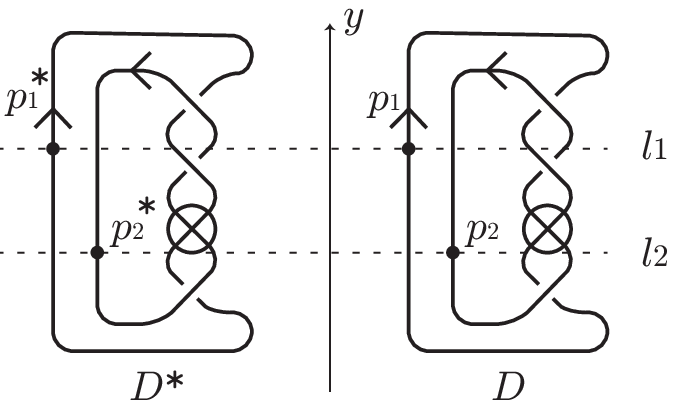}\hspace{1cm}
\includegraphics[width=4.8cm]{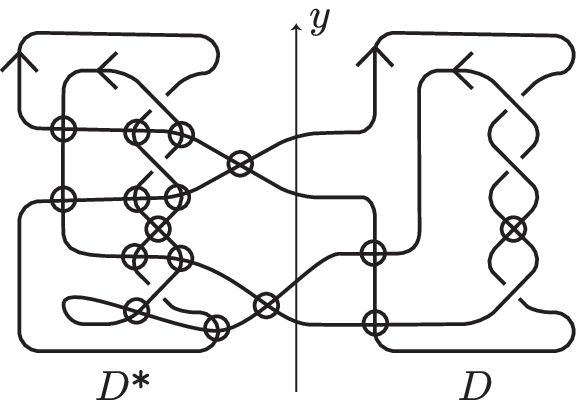}}\\
\centerline{(i)\hspace{6cm}(ii)}
%\caption{A virtual link diagram and it's refrection}
\caption{The coherent double covering of a virtual link diagram}\label{fgconvert1}
\end{figure}
For horizontal lines $l _1, \dots , l_k$ such that  $l_i$ contains $p_i$ and the corresponding cut point $p_i^*$ of $D^*$, 
we  replace  each part of $D\amalg D^*$ in a neighborhood of $N(l_i)$ for each $i\in\{1,\dots, k\}$  as in Figure~\ref{fgconvert2}. We denote by $\phi(D, P)$ the virtual link diagram obtained this way.

For example, for the virtual link diagram $D$ with the cut system $P$  depicted as in Figure~\ref{fgconvert1} (i), the virtual link diagram $\phi(D, P)$ is as in Figure~\ref{fgconvert1} (ii).
%%%%%%%%%%%%%%%%%%%
\begin{figure}[h]
\centering{
\includegraphics[width=10cm]{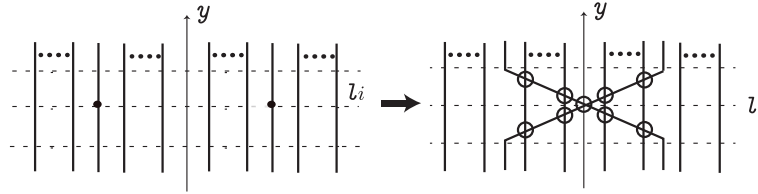}}
\caption{The replacement of diagrams}\label{fgconvert2}
\end{figure}
%%%%%%%%%%%%%%%%%%%%%%
%%%%%%%%%%%%%%%%%%%%
%\begin{figure}[h]
%\centering{
%\includegraphics[width=7cm]{Exconv3.eps} }
%\caption{The image by $\phi$}\label{fgconvert3}
%\end{figure}
Then we have the following.

%\begin{thm}{\cite{rkk7}}\label{kkthm1}
%Let $D_1$ and $D_2$ be twisted link diagrams.
%If $D_1$ and $D_2$ are equivalent as a twiated link, then $\psi(D_1)$ and $\psi(D_2)$ are equivalent as a virtual link.
%\end{thm}

%
\begin{prop}\label{thm1}
For a virtual link diagram $D$ with a cut system $P$, $(D, P)$,   $\phi(D, P)$ is normal.
\end{prop}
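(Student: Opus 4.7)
The plan is to use the equivalence stated earlier that a virtual link diagram is normal if and only if it admits an alternate orientation, together with the equivalent reformulation that a finite set $P$ of points on $D$ is a cut system if and only if $(D,P)$ admits an alternate orientation in which the orientation switches at each classical crossing of $D$ and at each point of $P$. I would exhibit such an alternate orientation on $\phi(D,P)$ built from orientations on $D$ and on the copy $D^*$.

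First, since $P$ is a cut system of $D$, I fix an alternate orientation $\omega$ on $D$ that switches at each classical crossing and at each cut point $p_i$. Transporting $\omega$ to $D^*$ by the horizontal slide yields an alternate orientation $\omega^*$ on $D^*$ at the classical crossings of $D^*$ and at each $p_i^*$. The candidate orientation on $\phi(D,P)$ uses $\omega$ on the $D$-side and the reversal $-\omega^*$ on the $D^*$-side. Reversing every arrow preserves the ``switch at every classical crossing'' property, so separately on each half the orientation is already alternate at every classical crossing. This sign flip between the two sheets is what makes the covering \emph{coherent}.

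Next, I would verify that these two orientations glue continuously across the replacement of Figure~\ref{fgconvert2}. Near each horizontal line $l_i$, the four arc-ends meeting at $p_i$ and $p_i^*$ carry orientations with a very rigid local pattern: at $p_i$ the two ends both point into $p_i$ or both point out of it (the defining property of a cut point for the alternate orientation), and after reversal the pattern at $p_i^*$ is exactly the opposite. The replacement in Figure~\ref{fgconvert2} reroutes those four ends into two new arcs that carry only virtual crossings along $l_i$, introducing no new classical crossings. A direct inspection of the picture shows that each new arc connects an ``outgoing'' end of $D$ at $p_i$ to an ``incoming'' end of $D^*$ at $p_i^*$, so the oriented arcs match up and produce a well-defined orientation on all of $\phi(D,P)$.

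Finally, since the replacement produces no new classical crossings, every classical crossing of $\phi(D,P)$ is inherited from $D$ or from $D^*$, and the orientation switches at each of them by construction. The original cut points $p_i$, $p_i^*$ have been absorbed by the replacement arcs and are no longer points of orientation reversal, so $\phi(D,P)$ admits a genuine alternate orientation with no cut points; by the criterion of \cite{rkns} this shows $\phi(D,P)$ is normal. The one step that requires genuine care, and which I expect to be the main obstacle, is the gluing check in the third paragraph: I must examine the local picture of Figure~\ref{fgconvert2} in each of the two possible cut-point orientation patterns (``both arrows in'' versus ``both arrows out'') and confirm that, after reversing $D^*$, the newly created arcs always carry consistent orientations. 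This is a short finite case analysis, but it is the heart of the argument.
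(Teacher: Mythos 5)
Your proof is correct, but it certifies normality with a different (equivalent) structure than the paper does. The paper's own proof works directly with checkerboard colorings: it colors the ALDs associated with $(D,P)$ and $(D^*,P^*)$ (with the colors interchanged on one copy) and observes from Figure~\ref{fgconvert4} that the replacement of Figure~\ref{fgconvert2} glues these into a checkerboard coloring of the ALD of $\phi(D,P)$. You instead use the dual certificate from \cite{rkns}: an alternate orientation on $D$ switching at classical crossings and at the points of $P$, its reversal on $D^*$, and a gluing check at each replacement site showing the result is a genuine alternate orientation of $\phi(D,P)$ with no residual switch points. The two arguments are structurally parallel --- in both, the essential move is to flip the structure on the second sheet and verify compatibility across the cut-point surgery --- but yours has the advantage of making that verification an explicit, checkable two-case analysis (both germs at $p_i$ pointing in versus both pointing out), whereas the paper delegates it entirely to a figure; your version also explains why the translated (rather than reflected) copy and the sign flip are exactly what make the covering ``coherent.'' The paper's coloring argument is shorter and matches the definition of normality verbatim, without invoking the equivalence with alternate orientability. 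Your identification of the gluing step as the crux is accurate, and the case analysis you describe does close it.
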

\begin{proof}
Let $D$ be a virtual  link diagram $D$ with a cut system $P$. The virtual link diagram $D^*$ with the cut system $P^*$ is a copy of  $(D, P)$ as the previous manner as in Figure~\ref{fgconvert4} (i). The  ALDs obtained from $D$ and $D^*$ can be colored as in Figure~\ref{fgconvert4} (ii). Then we see that $\phi(D, P)$ is normal as in the right of Figure~\ref{fgconvert4} (ii). 
\end{proof}
%%%%%%%%%%%%%%%%%%%%%%%
\begin{figure}[h]
\centering{
\includegraphics[width=11cm]{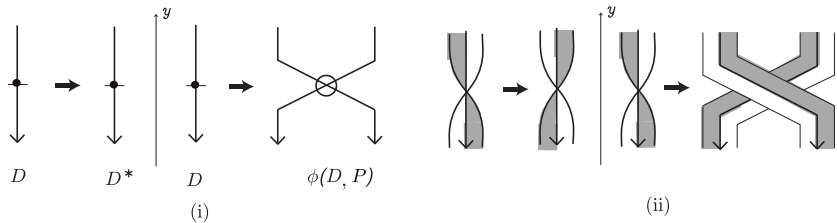} }
\caption{The converting diagram}\label{fgconvert4}
\end{figure}
%%%%%%%%%%%%%%%%%%

%We define the map $t$ from the set of virtual link diagrams to that of twiated link diagrams as follows;
%Let $D$ be a virtual link diagram. The image of $D$ under the map $t$ is obtained from $D$ by adding two bars around each virtual crossing of $D$ depicted as in 
%Figure~\ref{fgmapvt} (i).
%
%\begin{figure}[h]
%\centerline{
%\begin{tabular}{cc}
%\includegraphics[width=3cm]{MapVtoT2.eps}&\includegraphics[width=6cm]{MapVtoCCT2.eps}\\
%(i)&(ii)\\
%\end{tabular}}
%\caption{A twisted link diagram $t(D)$}\label{fgmapvt}
%\end{figure}

%\begin{thm}\label{thm2}
%For a virtual link diagram $D$, $t(D)$ and $\phi(t(D))$ are normal.
%\end{thm}

%
%\begin{thm}\label{thm1}
%Let $D_1$ and $D_2$ be virtual link diagrams.
%If $D_1$ and $D_2$ are equivalent (or K-equivalent), then $\phi(t(D_1))$ and $\phi(t(D_2))$ are K-equivalent.
%\end{thm}

%%%%%%%%%%%%%%%%%%%%%%%%%%%%%%%%%%%%
%\section{Map from a virtual link diagram to a normal virtual link diagram}\label{map}
%We define a map $\phi$ from a virtual link diagram with a cut system to a virtual link diagram as follows:
%Let $D$ be a virtual link diagram and $P$ be a cut system of $D$. 
%We denote cut  points of $P$ by $p_1,\dots , p_k$. 
%Assume that $(D,P)$ is on the right of the $y$-axis and all cut points are parallel to the $x$-axis with disjoint $y$-coordinates. Let  $D^*$ be  the virtual link diagram obtained from $D$ by reflection with respect to the $y$-axis and switching all real crossings of $D$. The  cut system of $D^*$ is obtained from $P$ by the reflection. We denote it by $P^*$.
%See Figure~\ref{fgconvert1} (i). 

Then we have 
$$\phi:\{\mbox{virtual link diagrams with cut systems}\}\longrightarrow\{\mbox{normal virtual link diagrams}\}.$$

\begin{rem}
Precisely speaking, $\phi(D,P)$ is well-defined when $(D,P)$ satisfies that all crossings and cut points have distinct $y$-coordinates. If we change $(D,P)$ by an isotopy of $\mathbb{R}^2$, $\phi(D,P)$ is preserved under an isotopy of $\mathbb{R}^2$ and virtual Reidemeister moves.
\end{rem}

Let  $(D,P)$ be a virtual link diagram with a cut system $P$. We call $\phi(D,P)$ the {\it converted normal diagram} or the {\it coherent double covering diagram} of $(D, P)$. 
Let $D$ be a virtual link diagram. For the canonical cut system $P_0$ of $D$, we denote by $\phi_0(D)$ the converted normal diagram $\phi(D, P_0)$. 
We call $\phi_0(D)$ the {\it canonical converted normal diagram} of $D$.
Then we have 
$$\phi_0:\{\mbox{virtual link diagrams}\}\longrightarrow\{\mbox{normal virtual link diagrams}\}.$$

%The local replacement of a virtual link diagram depicted in Figure~\ref{fgkflype} is called a {\it Kauffman flype} or a {\it K-flype}.
% If a virtual link diagram $D'$ is obtained from $D$ by a finite sequence of generalized Reidemeister moves and K-flypes, then they are said to be {\it K-equivalent}.
%
%\begin{figure}[h]
%\centering{
%\includegraphics[width=3cm]{kflype1.eps} }
%\caption{Kauffman flype}\label{fgkflype}
%\end{figure}
%
%\begin{rem}\label{rem:kflype}
%The $f$-polynomials of K-equivalent virtual link diagrams are the same \cite{rkauD}. 
%For a virtual link diagram of $D$, if a virtual link diagram $D'$ is obtained from $D$ by a K-flype at a classical crossing $c$, then the sign of the corresponding classical crossing $c'$ of $D'$ is the same as that of $c$. 
%If $D$ is normal, then $D'$ is normal. 
%\end{rem}

The following is our main theorem.
\begin{thm}\label{thm2}
Let $(D,P)$ and $(D',P')$ be virtual link diagrams with cut systems. If $D$ and $D'$ are equivalent, 
then the converted normal diagrams $\phi(D,P)$ and $\phi(D',P')$ are equivalent. 
In particular, if $D$ and $D'$ are equivalent, 
then the canonical converted normal diagrams $\phi_0(D)$ and $\phi_0(D')$ are equivalent.
\end{thm}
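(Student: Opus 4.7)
The plan is to split the claim into two independent invariance statements and compose them: (a) for a fixed diagram $D$, the coherent double cover $\phi(D,P)$ does not depend on the choice of cut system $P$ up to equivalence, and (b) if $D$ and $D'$ differ by a single generalized Reidemeister move, then cut systems $P,P'$ can be chosen so that $\phi(D,P)$ and $\phi(D',P')$ differ by equivalences. Combining (a) with (b) along a sequence of moves and cut-system changes relating $(D,P)$ to $(D',P')$ gives the theorem, and the canonical case follows from Proposition \ref{propcan}.

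For (a), Theorem \ref{thmc1} reduces the verification to the three cut point moves I, II, III of Figure \ref{fgcutptmove}. Each of these is a local modification of $P$ on $D$, so its effect on $\phi(D,P)$ is localized to the horizontal replacement strips of Figure \ref{fgconvert2} associated to the affected cut points, together with their mirror images on $D^*$. I would draw the before/after pictures of $\phi(D,P)$ in each case and exhibit a short sequence of classical and virtual Reidemeister moves relating them: for move II the two doubled replacement strips are parallel and can be pulled free using Reidemeister II and virtual Reidemeister II; for move I the replacement strip interacts with the doubled virtual crossing and simplifies via virtual Reidemeister II and IV; and for move III, the two strips pass near the doubled classical crossing and the result follows from a direct comparison for each crossing sign. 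The remark after Proposition \ref{thm1} handles the choice of $y$-coordinates, since different generic choices are related by planar isotopies and virtual Reidemeister moves.

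For (b), assume $D'$ is obtained from $D$ by a generalized Reidemeister move supported in a small disk $\Delta\subset\mathbb{R}^2$. Using (a) I would first sweep all cut points of $P$ out of $\Delta$ (introducing a cancelling pair by cut point move II when necessary and sliding them away by moves I and III), obtaining a cut system $\tilde P$ with $\phi(D,\tilde P)\sim\phi(D,P)$; similarly for $(D',P')$. With no cut point in $\Delta$, the two copies $D$ and $D^*$ in $\phi$ sit in disjoint half-planes, the move is carried out identically on each copy, and none of the horizontal replacement segments $l_i$ interact with either copy of $\Delta$. Hence $\phi(D,\tilde P)$ and $\phi(D',\tilde P')$ differ by performing the given generalized Reidemeister move twice, once in each sheet, and are therefore equivalent as virtual links.

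The main obstacle I anticipate is the case analysis in (a), particularly cut point move III: pushing a cut point across a classical crossing changes the horizontal replacement strip so that it threads through the doubled crossing, and one must check for each of the two crossing signs and each relative position of the cut point that the resulting local tangle on $\phi(D,P)$ is Reidemeister-equivalent to the tangle before the move. A secondary technical point is verifying that the sweeping procedure in (b) can always be performed so that the points it displaces do not re-enter $\Delta$ from the other side; this follows from the fact that cut point moves I--III together with planar isotopy allow a cut point to be relocated to any prescribed edge segment of $|D|$ outside $\Delta$.
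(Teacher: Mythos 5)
Your overall architecture coincides with the paper's: your part (a) is exactly Lemma~\ref{lem2}, proved the same way (reduce to cut point moves I--III via Theorem~\ref{thmc1} and check that each move induces virtual Reidemeister moves on the cover), and your part (b) plays the role of Lemma~\ref{lem1}, where the paper likewise adapts the cut systems near the move by cut point moves before doubling the move. So the decomposition and the tools are the right ones.

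There is, however, a gap in step (b), and it is not where you locate the difficulty. After sweeping the cut points of $P$ out of the move disk $\Delta$ to get $\tilde P$, and those of $P'$ out of $\Delta$ to get $\tilde P'$, nothing forces $\tilde P$ and $\tilde P'$ to agree outside $\Delta$ ($P$ and $P'$ were arbitrary to begin with), so the conclusion that $\phi(D,\tilde P)$ and $\phi(D',\tilde P')$ ``differ by performing the move twice'' does not follow as stated. The case where this genuinely bites is virtual Reidemeister move IV: the canonical cut points carried by the two strands of the classical crossing sit, before the move, on the two half-edges on one side of that crossing and, after the move, on the two half-edges on the other side; a single cut point can never be transported past a classical crossing (only cut point move III crosses a classical crossing, and it moves four points at once), so no amount of sweeping will make the two exterior cut systems coincide. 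The repair is to fix one point set: observe that a cut system of $D$ with no points in $\Delta$ is again a cut system of $D'$ (for virtual moves the Gauss diagram, hence the cut-system condition, is unchanged; for classical moves the canonical cut system of $D'$ works by Proposition~\ref{propcan} and equals that of $D$), take $\tilde P'=\tilde P$, perform the move once in each sheet, and then invoke (a) once more on $D'$ to return to the given $P'$. This is in substance what Lemma~\ref{lem1} does: it works with the canonical cut systems, uses cut point moves I and II for virtual moves I--III, needs cut point move III precisely for virtual Reidemeister move IV, and then exhibits the resulting covers as related by a nontrivial sequence of virtual Reidemeister moves (Figure~\ref{fgprflem11}(v)) rather than by a literal doubling of the move. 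A smaller point: a cut point lying on an edge entirely contained in $\Delta$ (the kink edge of Reidemeister move I, the inner triangle edges of move III) cannot be slid out at all and must be cancelled in pairs; it is cleaner to first pass to the canonical cut system by (a), which places no points near classical crossings and makes the classical-move cases immediate.
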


%\begin{cor}\label{cor6}
%For a virtual knot $K$ and its diagram $D$ with a cut system $P$,  $\mathrm{lk}(\phi(D,P))$ is an invariant of virtual knots.
%\end{cor}
%%%%%%%%%%%%%%%%%%%%%%%%%%

\section{Proof of Theorem \ref{thm2}}
Theorem~\ref{thm2} is obtained from Lemmas~\ref{lem2} and \ref{lem1} stated below. 
\begin{lem}\label{lem2}
Let $D$ be a virtual  link diagram. Suppose that $P$ and  $P'$ are cut systems of $D$.
Then the converted normal diagrams $\phi(D,P)$ and $\phi(D,P')$ are equivalent.
\end{lem}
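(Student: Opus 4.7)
The plan is to reduce to a local check via Theorem~\ref{thmc1}. Since any two cut systems of a fixed virtual link diagram $D$ are related by a finite sequence of cut point moves I, II, and III, it suffices to verify the lemma in the case where $P$ and $P'$ differ by a single cut point move of type I, II, or III, applied in some small disk $B\subset\mathbb{R}^2$. In other words, I need to compare $\phi(D,P)$ with $\phi(D,P')$ locally inside the region $B\sqcup B^*$ of the coherent double covering, because away from $B$ the two diagrams are identical.

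First I would set up the local pictures. By assumption the crossings and cut points of $D$ have distinct $y$-coordinates, and the same can be assumed after a small isotopy for $(D,P')$ (cf.~the Remark after Proposition~\ref{thm1}, which guarantees that changing $(D,P)$ by such an isotopy only modifies $\phi(D,P)$ by isotopy and virtual Reidemeister moves). With this in place, each cut point $p$ contributes the horizontal ``bridge'' replacement shown in Figure~\ref{fgconvert2}, connecting $p$ to its mirror $p^*$, while a cut point not lying in the ball $B$ contributes the same bridge on both sides. Thus $\phi(D,P)$ and $\phi(D,P')$ agree outside $B\sqcup B^*$, and the comparison reduces to drawing the two local pictures and finding a sequence of generalized Reidemeister moves between them.

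Next I would handle the three cases one by one. For cut point move II (creation/cancellation of two adjacent cut points on a single edge), the two horizontal bridges between $D$ and $D^*$ form a pair of parallel arcs that can be eliminated by a virtual Reidemeister move of type~II followed possibly by type~I, because the intermediate pieces of strand in $D$ and $D^*$ are short arcs with no crossings. For cut point move I (sliding a cut point through a virtual crossing), the bridge attached at $p$ is moved across the virtual crossing in $D$ and the mirror virtual crossing in $D^*$; the resulting diagram differs from the original by virtual Reidemeister moves~III and~IV applied in the two copies. For cut point move III (the four-cut-point variant around a virtual crossing), the four bridges can be pushed past the virtual crossings in $D$ and $D^*$ and recombined using virtual Reidemeister moves, after which a type~II virtual Reidemeister move cancels the resulting parallel bridges.

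The main obstacle will be the book-keeping in case~I (and, to a lesser extent, case~III): one has to track how the two bridges that were attached to a cut point and its mirror reorganize themselves after the local modification, and then exhibit an explicit sequence of virtual Reidemeister moves in the coherent double covering realizing this reorganization. Verifying that the over/under information of the classical crossings is unchanged throughout, and that the pictures match globally, is essentially a finite diagrammatic calculation; this is where the proof proper will reside, and it is conveniently presented with the local diagrams as in Figures~\ref{fgconvert1}, \ref{fgconvert2}, and \ref{fgconvert4}.
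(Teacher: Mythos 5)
Your proposal follows essentially the same route as the paper: reduce to a single cut point move via Theorem~\ref{thmc1} and then check each of the three moves locally, exhibiting explicit virtual Reidemeister moves relating $\phi(D,P)$ and $\phi(D,P')$ (the paper carries out exactly this case check in Figure~\ref{fgprflem21}, using only virtual Reidemeister moves II and III). One correction to your case analysis: cut point move III inserts or deletes four cut points around a \emph{classical} crossing, not a virtual one, so your local picture for case III should be adjusted accordingly --- the bridges introduced by the construction still carry only virtual crossings, and the diagrammatic verification goes through.
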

\begin{proof}%[Proof of Lemma \ref{lem2}]
Let $D$ be a virtual  link diagram with a cut system $P$. Suppose that  $P'$ is a cut system of $D$ obtained from $P$ by one of cut point moves I , II or III in Figure~\ref{fgcutptmove}. 
Then $\phi(D, P')$ is related to $\phi(D, P)$ by  virtual Reidemeister moves II and III as in Figure~\ref{fgprflem21}. Thus $\phi(D, P)$ and $\phi(D, P')$ are equivalent. 
\end{proof}
\begin{figure}[h]
\centerline{
\begin{tabular}{cc}
 \includegraphics[width=5cm]{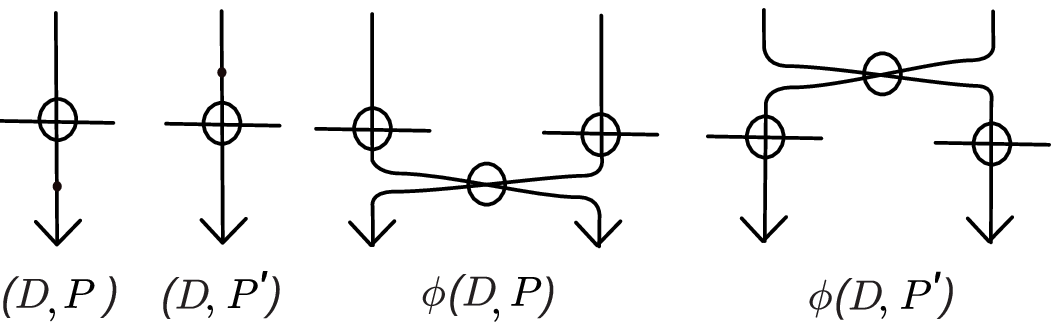}&
  \includegraphics[width=5cm]{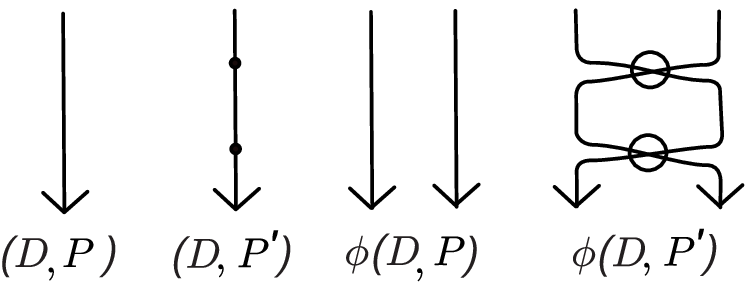}\\
 (i)&(ii)\\   
\end{tabular}
}
\centerline{
\includegraphics[width=7cm]{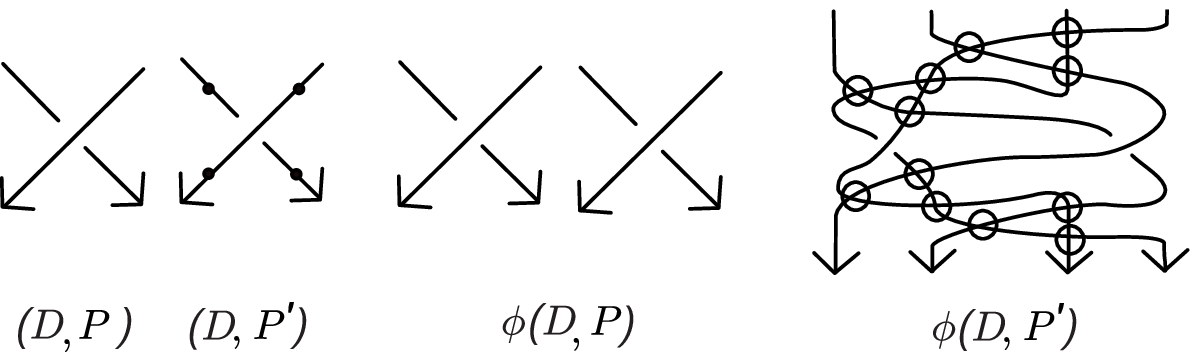}
}\centerline{(iii)}
%\centering{
%\includegraphics[width=7cm]{prflem23.eps}}\\
%\centering{(iii)}
\caption{Converted normal diagrams related by a cut point move}\label{fgprflem21}
\end{figure}
\begin{lem}\label{lem1}
Let $D_1$ and $D_2$ be virtual link diagrams.
If $D_1$ and $D_2$ are  equivalent then the canonical converted normal virtual link diagrams $\phi_0(D_1)$ and $\phi_0(D_2)$ are equivalent.
\end{lem}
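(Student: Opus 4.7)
The plan is to reduce Lemma~\ref{lem1} to a local analysis of each generalized Reidemeister move, exploiting Lemma~\ref{lem2}. Since equivalence of virtual link diagrams is generated by generalized Reidemeister moves, it suffices to treat the case where $D_2$ is obtained from $D_1$ by a single such move performed inside a small disk $B \subset \mathbb{R}^2$. By Lemma~\ref{lem2}, $\phi_0(D_i) = \phi(D_i, P_0)$ is equivalent to $\phi(D_i, Q_i)$ for \emph{any} cut system $Q_i$ of $D_i$. Hence I only need to exhibit cut systems $Q_1$ of $D_1$ and $Q_2$ of $D_2$ that coincide outside $B$ and for which $\phi(D_1, Q_1)$ and $\phi(D_2, Q_2)$ are equivalent virtual link diagrams.

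For each of the seven generalized Reidemeister moves I would then proceed as follows. For a classical Reidemeister move (I, II, or III), the disk $B$ contains no virtual crossings, so I would choose $Q_1$ and $Q_2$ to have no cut points inside $B$ and to agree with a common cut system outside $B$. The coherent double cover $\phi(D_i, Q_i)$ inside $B$ then consists of two disjoint parallel copies of the local picture, so $\phi(D_1, Q_1)$ and $\phi(D_2, Q_2)$ differ by two disjoint classical Reidemeister moves of the same type, hence are equivalent. For a virtual Reidemeister move (I, II, III, or IV) I would place cut points on the arcs inside $B$ as prescribed by the canonical cut system on the side of the move that carries the virtual crossings, and then assign $Q_1, Q_2$ compatibly outside $B$. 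One then verifies that the two local coherent double covers in $B$ are related by a short, explicit sequence of (generalized) Reidemeister moves; this verification is pictorial, analogous to Figure~\ref{fgprflem21}.

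The main obstacle will be the case-by-case pictorial check of the four virtual Reidemeister moves, and especially the mixed move IV (one classical and two virtual crossings) and the purely virtual move III. For these moves the number and location of cut points on the three strands in $B$ must be matched carefully on both sides of the move, so that the two coherent double covers differ by a concrete, exhibitable sequence of virtual Reidemeister moves (with an extra classical III in case IV). Once each of the seven local verifications is carried out, concatenating the chain $\phi_0(D_1) \sim \phi(D_1, Q_1) \sim \phi(D_2, Q_2) \sim \phi_0(D_2)$ gives the lemma and, combined with Lemma~\ref{lem2}, completes the proof of Theorem~\ref{thm2}.
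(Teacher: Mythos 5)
Your proposal follows essentially the same route as the paper: reduce to a single generalized Reidemeister move, handle classical moves by observing the double cover performs two disjoint copies of the move, and for virtual moves use Lemma~\ref{lem2} to adjust the canonical cut systems by cut point moves so that the two coherent double covers are related by an explicit short sequence of virtual Reidemeister moves, verified pictorially. The paper carries out exactly this case-by-case pictorial check (its Figure on diagrams related by a virtual Reidemeister move), so your plan is correct and matches the published argument.
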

\begin{proof}%[Proof of Lemma \ref{lem1}]
Let $D_1$ be a virtual  link diagram with the canonical cut system $P_1$. Suppose that a virtual link diagram $D_2$ is obtained from $D_1$ by one of generalized Reidemeister moves   
and $P_2$ is the canonical cut system of $D_2$.
If $D_2$ is related to $D_1$ by one of Reidemeister moves, then $\phi_0(D_1)=\phi(D_1, P_1)$ and $\phi_0(D_2)=\phi(D_2,P_2)$ are related by two Reidemeister moves. 
%Suppose that $D_2$ is related to $D_1$  by one of virtual Reidemeister moves as in Figure~\ref{fgprflem11} (i), (ii), (iii) and (iv). 
%As in Figure~\ref{fgprflem11} (i), if $D_2$ is related to $D_1$  by a virtual Reidemeister move I, then $\phi(D_1,P_1)$ is related to $\phi(D_2,P_2)$ by some virtual Reidemeister moves. 
As in Figure~\ref{fgprflem11} (i) (or (ii)), suppose that  $D_2$ is related to $D_1$  by a virtual Reidemeister move I (or II) and let $P_2'$ be the cut system obtained from $P_2$ by cut point moves I and II as in the figure. 
By Lemme~\ref{lem2},  $\phi(D_2,P_2)$ and $\phi(D_2,P_2')$ are equivalent.
On the other hand $\phi(D_1,P_1)$ and $\phi(D_2,P_2')$ are related by two virtual Reidemeidter moves I (or II) as in Figure~\ref{fgprflem11} (i) (or (ii)). Thus $\phi(D_1,D_1)$ and  $\phi(D_2,D_2)$ are equivalent.
As in Figure~\ref{fgprflem11} (iii), suppose that  $D_2$ is related to $D_1$  by a virtual Reidemeister move III and let $P_1'$ (or $P_2'$) be the cut system obtained from $P_1$ (or $P_2$) by cut point moves I and II as in the figure. 
By Lemme~\ref{lem2},  $\phi(D_1,P_1)$  (or $\phi(D_2,P_2)$ ) and $\phi(D_1,P_1')$ (or $\phi(D_2,P_2')$) are equivalent.
On the other hand, $\phi(D_1,P_1')$ and $\phi(D_2,P_2')$ are  related by two virtual Reidemeidter move III. 
As in Figure~\ref{fgprflem11}  (iv), suppose that  $D_2$ is related to $D_1$  by a virtual Reidemeister move IV and let $P_1'$ (or $P_2'$) be the cut system obtained from $P_1$ (or $P_2$) by cut point moves I and II  
(or cut point moves I , II and III) as in the figure. By Lemme~\ref{lem2},  $\phi(D_1,P_1)$  (or $\phi(D_2,P_2)$ ) and $\phi(D_1,P_1')$ (or $\phi(D_2,P_2')$) are equivalent.
On the other hand, 
as in Figure~\ref{fgprflem11}  (v),  $\phi(D_1',P_1')$  and $\phi(D_2',P_2')$ are equivalent by virtual Reidemeister moves. 

In Figure~\ref{fgprflem11}, if the orientation of some strings of virtual link diagram $D_i$ are different from those of it,  we have the result by a similar argument. 
\begin{figure}[h]
\centering{
\begin{tabular}{ccc}
\includegraphics[width=3.cm]{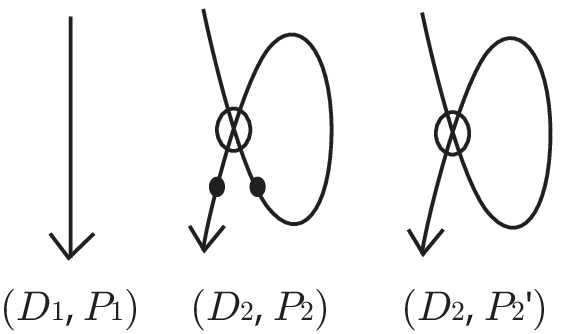} &\includegraphics[width=4.cm]{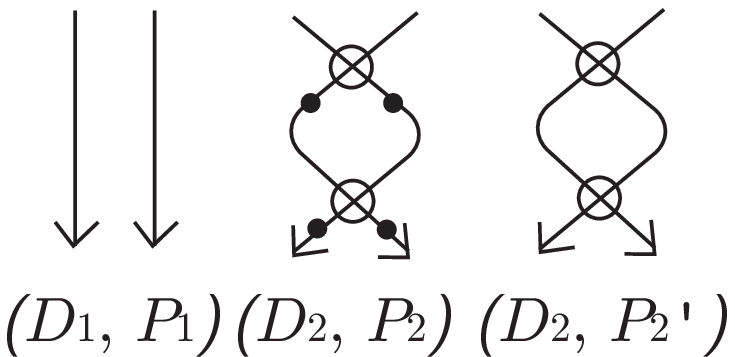}&\includegraphics[width=5.cm]{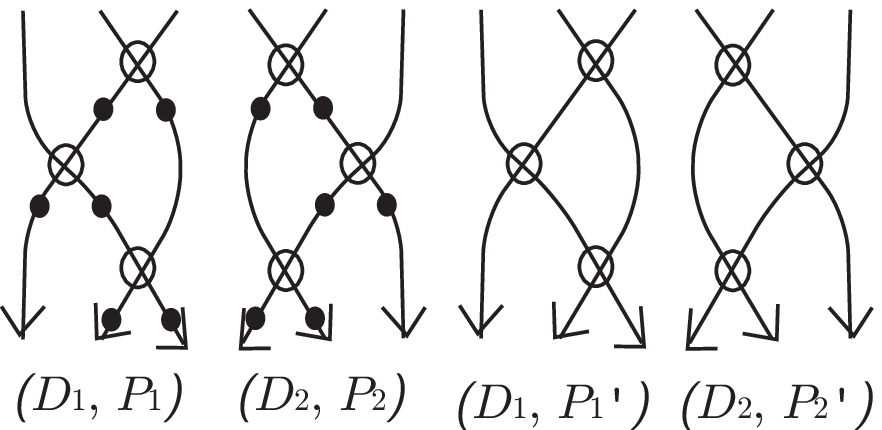}\\
{\small Virtual Reidemeister move I }&{\small Virtual Reidemeister move II}
&{\small Virtual Reidemeister move III}\\
(i)&(ii)&(iii)\\
\end{tabular}}
\centering{
\begin{tabular}{cc}
\includegraphics[width=6cm]{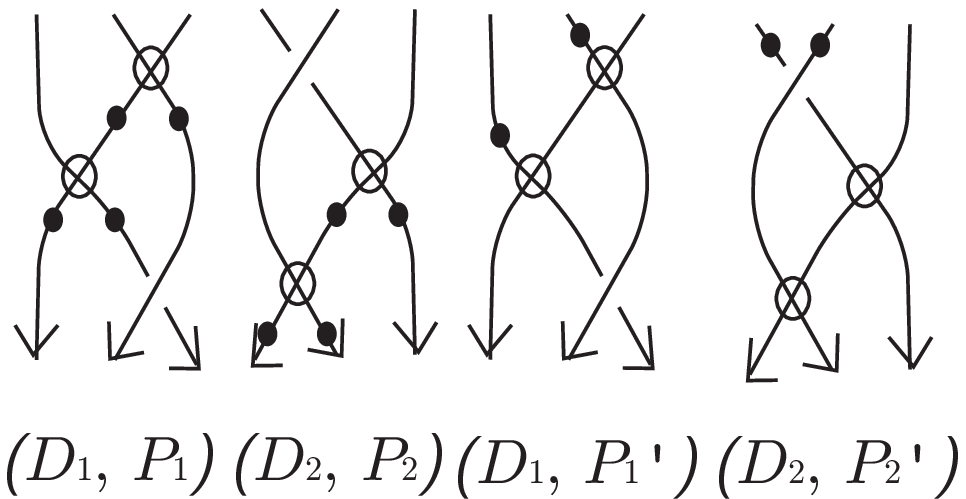}
&\includegraphics[width=7cm]{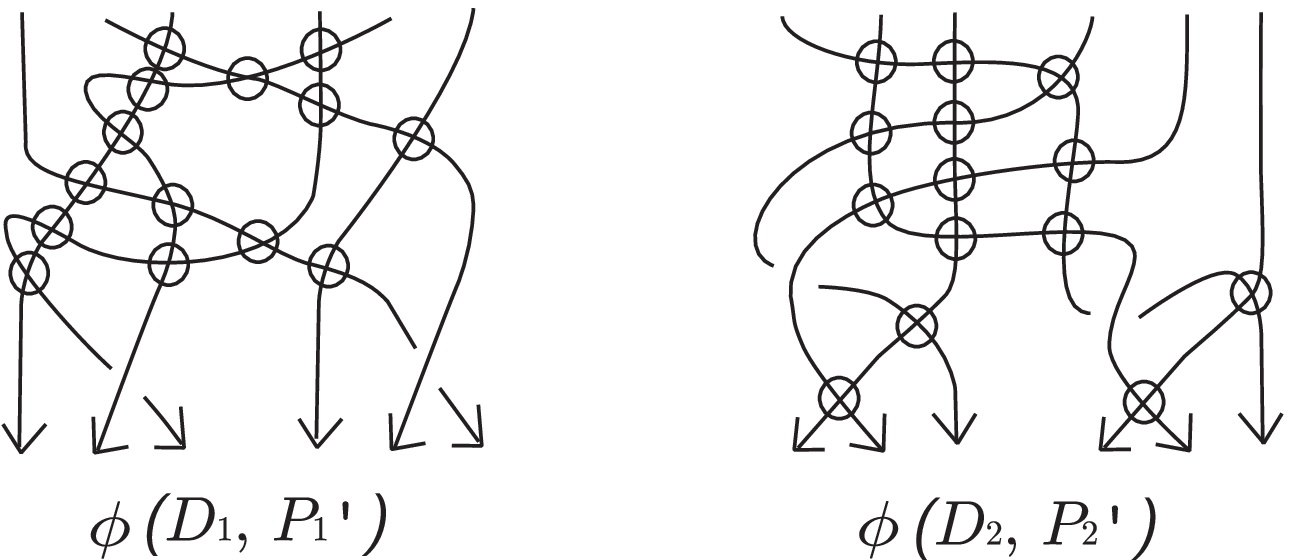}\\
%\multicolumn{2}{c}{\includegraphics[width=7cm]{prflem1k.eps}}\\
{\small Virtual Reidemeister move IV}
&\parbox{6cm}{\small Coherent double covering diagrams (virtual Reidemeister move IV)}\\
(iv)
&(v)\\
\end{tabular}}
\caption{Diagrams related by a virtual Reidemeister move}\label{fgprflem11}
\end{figure}
%
%If $D_1$ is related to $D_2$ by a K-flype, then $\phi(D_1, P_1)$ and $\phi(D_2,P_2)$ are related by some virtual Reidemeister moves in  Figure~\ref{fgprflem11} (v). 

\end{proof}
%

%%%%%%%%%%%%%%%%%%%%%%%%%%%%%%%%%%%%%
\section{A relationship between invariants and coherent double covering diagrams}\label{sec:relationship}
%\section{The odd writhe of a virtual knot}\label{sec:oddwrithe}
In this section we discuss a relationship between some invariants and coherent covering diagrams.

\subsection{Odd writhe}
We give  
an interpretation of the odd writhe of a virtual knot in terms of the linking number of the converted normal diagram $\phi(D, P)$. The argument of this subsection is  similar to that of Section 4 in \cite{rkn2}. 

For a 2-component virtual link diagram  $D$, the half of the sum of signs of nonself classical crossings of $D$ is called the {\it linking number} of $D$.  % (cf. Remark~\ref{rem:kflype}).
The linking number is an invariant of 2-component virtual link.

We have the following theorems (Theorems \ref{thmk1} and \ref{thmk2}).

\begin{thm}\label{thmk1}
Let $(D,P)$ be a virtual knot diagram with a cut system. 
Then $\phi(D,P)$ is a 2-component virtual link diagram and the linking number of $\phi(D,P)$ is an invariant of the virtual knot represented by $D$.
\end{thm}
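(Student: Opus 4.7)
The plan is to reduce the theorem to two assertions: (a) $\phi(D,P)$ is always a $2$-component virtual link diagram when $D$ is a knot diagram, and (b) the linking number of $\phi(D,P)$ is independent of the choices of diagram representative of the virtual knot and of the cut system. Once (a) is established, part (b) follows almost immediately from Theorem~\ref{thm2} together with the fact, recalled just before the theorem, that the linking number is invariant under generalized Reidemeister moves.

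For (a), I would argue by component tracking. Since $D$ is a virtual knot diagram, $D$ and its mirror copy $D^{*}$ are each connected, so $D\amalg D^{*}$ has exactly two components before the replacement. The heart of the matter is that the local replacement in Figure~\ref{fgconvert2} is \emph{coherent}: on each horizontal line $l_i$ the new arcs connect the strand of $D$ at $p_i$ to the strand of $D$ on the opposite side of $p_i$ (routed around $p_i^{*}$ via a virtual crossing), and symmetrically for $D^{*}$ at $p_i^{*}$. Consequently, following a strand starting on the $D$-side, one always returns to the $D$-side, and likewise for the $D^{*}$-side. (This is exactly the feature that distinguishes the present construction from the double covering of \cite{rkn2}.) Combined with Corollary~\ref{cor1} (which guarantees that the number of replacement sites is even, so any parity obstructions are absent), this gives $\phi(D,P)$ exactly two components. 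The natural orientations on $D$ and $D^{*}$ extend through the replacement, so $\phi(D,P)$ is an oriented $2$-component virtual link diagram.

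For (b), let $D'$ be any virtual knot diagram equivalent to $D$ and let $P'$ be any cut system of $D'$. By Theorem~\ref{thm2}, the converted normal diagrams $\phi(D,P)$ and $\phi(D',P')$ are equivalent as virtual link diagrams. Since the linking number of a $2$-component virtual link diagram is invariant under generalized Reidemeister moves, we conclude
\[
\mathrm{lk}\bigl(\phi(D,P)\bigr)=\mathrm{lk}\bigl(\phi(D',P')\bigr),
\]
which is exactly the claimed invariance.

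The main obstacle is step (a): a careful local analysis of Figure~\ref{fgconvert2} to confirm that the coherent replacement really preserves the partition into a $D$-sheet and a $D^{*}$-sheet, for every configuration of orientations of the strands meeting at a pair $(p_i,p_i^{*})$. Once this bookkeeping is done once and for all, the rest of the proof is a direct citation of Theorem~\ref{thm2} and of the virtual-link invariance of the linking number.
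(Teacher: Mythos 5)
Your part (b) is exactly the paper's argument: cite Theorem~\ref{thm2} and then use the invariance of the linking number under generalized Reidemeister moves. The gap is in part (a). Your component tracking rests on the claim that the replacement of Figure~\ref{fgconvert2} reconnects the strand of $D$ entering $p_i$ to the strand of $D$ leaving $p_i$, so that the partition of $\phi(D,P)$ into a ``$D$-sheet'' and a ``$D^*$-sheet'' is preserved. That is not what the construction does: at each pair $(p_i,p_i^*)$ the two copies are \emph{swapped}, i.e.\ the strand of $D$ entering $p_i$ continues into the strand of $D^*$ leaving $p_i^*$, and vice versa. This is what the Gauss-diagram replacement of Figure~\ref{fggaussrep} encodes, and it is stated explicitly in the base case of Lemma~\ref{lem3}: for $P=\{p_1,p_2\}$ one component of $\phi(D,P)$ is $\widetilde{A_1}\cup\widetilde{A_2^*}$ and the other is $\widetilde{A_2}\cup\widetilde{A_1^*}$, so each component mixes arcs of $D$ with arcs of $D^*$. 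A quick sanity check shows your version cannot be right: the replacement introduces only virtual crossings between the two copies, so if the sheets were preserved every classical crossing of $\phi(D,P)$ would be a self-crossing and the linking number would vanish identically, contradicting Theorem~\ref{thmk2} and the example with $\mathrm{lk}_N(D)=-2$ in Figure~\ref{fgexapli1}.

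The correct route --- and the one the paper takes --- is Lemma~\ref{lem3}: an induction on the number $2n$ of cut points in the Gauss-diagram picture, showing that the successor of $\widetilde{A_i}$ is $\widetilde{A_{i+1}^{\,*}}$ and the successor of $\widetilde{A_i^*}$ is $\widetilde{A_{i+1}}$, so the arcs string together as $\widetilde{A_1},\widetilde{A_2^*},\widetilde{A_3},\dots$ on one component and $\widetilde{A_2},\widetilde{A_1^*},\dots$ on the other. In this argument the evenness of $|P|$ (Corollary~\ref{cor1}) is not a side remark about ``absent parity obstructions'': it is precisely what makes the alternating chain close up into two cycles rather than one (an odd number of cut points would yield a single component). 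Once part (a) is replaced by this argument, your part (b) goes through verbatim.
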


The odd writhe is a numerical invariant of virtual knots \cite{rkauE}. We recall the definition of the odd writhe later. 

\begin{thm}\label{thmk2}
Let $(D,P)$ be a virtual knot diagram with a cut system. The linking number of $\phi(D,P)$ is equal to the odd writhe of $D$.
\end{thm}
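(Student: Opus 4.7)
The plan is to compute $\mathrm{Link}(\phi(D,P))$ directly by examining which classical crossings of $\phi(D,P)$ lie between the two components, and to match the resulting sum with the odd writhe of $D$ via a Gauss-diagram parity argument. By Theorem~\ref{thmk1}, $\mathrm{Link}(\phi(D,P))$ depends only on the virtual knot represented by $D$, so by Lemma~\ref{lem2} I may assume without loss of generality that $P$ is the canonical cut system $P_0$ of $D$.

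Inspection of the construction in Figure~\ref{fgconvert2} shows that every classical crossing of $\phi(D,P_0)$ is a lift of a classical crossing of $D$: each such $c$ produces two classical crossings $\tilde c\subset\tilde D$ and $\tilde c^{\,*}\subset\tilde D^{*}$, both of sign $\mathrm{sgn}(c)$, while the local replacements at the cut-point bands $N(l_i)$ introduce no additional classical crossings. Following the underlying knot $D$ from one strand at $c$ to the other along either arc and tracking which sheet ($\tilde D$ or $\tilde D^{*}$) one lies on, one sees that the two strands at $\tilde c$ belong to the same component of $\phi(D,P_0)$ if and only if the chosen arc contains an even number of cut points; and by the swap symmetry between $\tilde D$ and $\tilde D^{*}$, the two lifts $\tilde c$ and $\tilde c^{\,*}$ are either both self-crossings or both non-self crossings. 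Hence
\[
\mathrm{Link}(\phi(D,P_0))\;=\;\sum_{c}\mathrm{sgn}(c)\cdot\bigl[\text{the arc from $c$ to $c$ carries an odd number of cut points of $P_0$}\bigr],
\]
where $c$ ranges over classical crossings of $D$.

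For the canonical cut system, each virtual crossing of $D$ places one cut point on each of its two strands, so the parity of cut points on an arc of $D$ from $c$ to $c$ equals the parity of $s_V(c)$, the number of virtual crossings of $D$ whose two visits are split by the endpoints of $c$. Letting $s_C(c)$ denote the analogous count for classical crossings other than $c$, the crossing $c$ is odd (in the Gauss-diagram sense) precisely when $s_C(c)$ is odd. The theorem thus reduces to the parity identity
\[
s_C(c)\;\equiv\;s_V(c)\pmod{2}.
\]

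The key and main obstacle is this parity identity, and the argument I would use relies on the planarity of the underlying $4$-valent graph $|D|\subset\mathbb{R}^{2}$. Closing off an arc from $c$ to $c$ at the double point $c$ produces a closed curve in $\mathbb{R}^{2}$ whose self-intersections outside $c$ are exactly those crossings of $|D|$ --- classical or virtual --- that are split by $c$. A Jordan-curve style count, identical in spirit to the classical fact that every chord in the Gauss diagram of a classical knot is linked with an even number of other chords, shows that the total number of such crossings, $s_C(c)+s_V(c)$, is even. Consequently $s_V(c)\equiv s_C(c)\pmod 2$, so $\tilde c$ is non-self precisely when $c$ is odd, and $\mathrm{Link}(\phi(D,P))$ equals the odd writhe of $D$.
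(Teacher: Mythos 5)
Your proof is correct, but the central step is handled by a genuinely different mechanism than in the paper. The paper works with an arbitrary cut system $P$ and never reduces to the canonical one: it uses the fact that $(D,P)$ admits an alternate orientation (the characterization of cut systems) to deduce the parity condition that, for every classical crossing $c$, the number of cut points on $I_c$ plus the number of chord endpoints on $I_c$ is even; combined with Lemma~\ref{lem3} this immediately identifies odd crossings with nonself crossings of $\phi(D,P)$. You instead first invoke Lemma~\ref{lem2} to pass to the canonical cut system, observe that its cut points sit one on each strand at every virtual crossing so that the cut-point count on $I_c$ has the parity of $s_V(c)$, and then prove the identity $s_C(c)\equiv s_V(c)\pmod 2$ directly from planarity of $|D|$ by a mod-$2$ intersection (Jordan-curve) count of the two loops based at $c$. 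Both arguments rest on the same component-tracking fact, which is exactly Lemma~\ref{lem3}; your ``sheet-switching at each cut point'' paragraph is a restatement of its proof, and you could simply cite it. The trade-off: the paper's route is uniform in $P$ and shorter once the alternate-orientation criterion is available, while yours is more self-contained (it does not use that criterion at all) and exposes the geometric reason the construction detects odd crossings, namely that $s_C(c)+s_V(c)$ is always even for a planar underlying curve, so oddness of $s_C(c)$ is equivalent to oddness of the number of canonical cut points linked with $c$. Two small points worth making explicit if you write this up: the placement of the canonical cut points (one near each of the two preimages of a virtual crossing, as in Figure~\ref{fgVtoCC}) is essential to your parity computation and should be stated, and the passage from your displayed sum to the linking number silently uses that each odd crossing contributes two nonself lifts $\tilde c$, $\tilde c^{\,*}$ of the same sign, cancelling the factor $\tfrac12$ in the definition of the linking number.
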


Let $D$ be a virtual link diagram. The {\it Gauss diagram} of $D$ is a set of oriented circles which are the preimage of $D$ with oriented chords each of which corresponds to a classical crossing and  its tail (or its head) indicates an overpass  (or an underpass) of the  classical crossing. Each chord is equipped with a sign of the corresponding classical crossing. 
The Gauss diagram in Figure \ref{fgexgauss1} (i)  is that of the virtual knot diagram in Figure \ref{fig:extwtdiag} (i). 
\begin{figure}[h]
\centering{
\includegraphics[width=10cm]{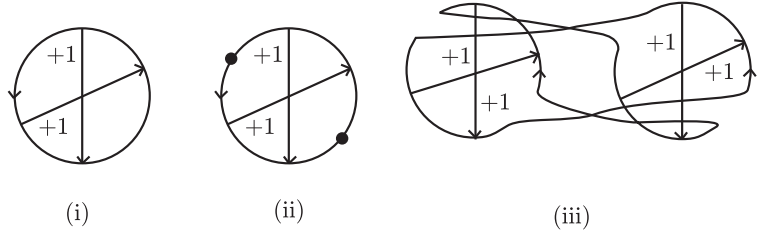} }
\caption{The Gauss diagrams}\label{fgexgauss1}
\end{figure}
For  a virtual link diagram $D$ with a cut system $P$, the Gauss diagram with points, denoted by $G(D,P)$, of $(D,P)$ is obtained from the Gauss diagram of $D$ by adding points on arcs which correspond to the points of $P$. We denote by $G(D)$ the Gauss diagram of $D$ and by $G(P)$ the points of $G(D,P)$ corresponding to $P$. Then $G(D, P)=(G(D), G(P))$. In Figure~\ref{fgexgauss1} (ii) we show  the Gauss diagram with  points of the virtual link diagram with points in Figure~\ref{fgcutpt} (ii).
Let $(D^*,P^*)$ be the virtual link diagram with a cut system which is a copy of  $(D,P)$.
The Gauss diagram with points $G(D^*,P^*)$ is a copy of  $G(D,P)$.  
%See  Figure~\ref{fgexgauss1} (ii) and (iii). 
The Gauss diagram of $\phi(D,P)$ is obtained from the Gauss diagrams of $D\amalg D^*$ by a local replacement around each point $p\in G(P)$ of the Gauss diagram $G(D)$ and around the corresponding point $p^*\in G(P^*)$ of $G(D^*)$ as in  Figure~\ref{fggaussrep}.  For a virtual knot diagram of $D$ with a set of points on edges $P$, we denote the Gauss diagram of $\phi(D,P)$ by $G(\phi(D, P))$. 
The Gauss diagram in Figure~\ref{fgexgauss1} (iii) is $G(\phi(D,P))$ for $(D, P)$ depicted in Figure~\ref{fgcutpt} (ii) by the map $\phi$. 
%The Gauss diagram in Figure~\ref{fgexgauss1} (iv) is the Gauss diagram of the converting diagram of the virtual link diagram with points on its edges in Figure~\ref{fgcutpt} (ii). 
%
%Let $D$ be a virtual link diagram with a cut system $P$. We denote the Gauss diagram of $D$ with the preimage of cut system $P$ by $G(D,P)$. In Figure~\ref{fgexgauss1} (ii) we see  the Gauss diagram with a cut system of the virtual link diagram with a cut system in Figure~\ref{fgcutpt} (ii). 
%For a virtual knot diagram $D$ with a cut system $P$, the Gauss diagram of $D^*$ with a cut system $P^*$, $G(D^*, P^*)$ is obtained from $G(D, P)$ by reflection with respect to $y$-axis and revering all orientations of chords.
%For example, the Gauss diagram in Figure~\ref{fgexgauss1} (iii) is the Gauss diagram of the virtual knot diagram obtained from a diagram in Figure \ref{fig:extwtdiag} (i) by the reflection with respect to $y$-axis and switching all classical crossings. 
%We obtain the Gauss diagram of $\phi(D,P)$ by a local replacement of  each cup point $p$ of $G(D,P)$ and the corresponding cut point $p^*$ of $G(D^*. P^*)$ in $G(D, P)\amalg G(D^*. P^*)$ as in Figure~\ref{fggaussrep}. We denote the Gauss diagram of $\phi(D,P)$ by $G(\phi(D, P))$. The Gauss diagram in Figure~\ref{fgexgauss1} (iv) is the Gauss diagram of the converting diagram of the virtual link diagram with the cut system in Figure~\ref{fgcutpt} (ii). 
%
\begin{figure}[h]
\centering{
\includegraphics[width=7cm]{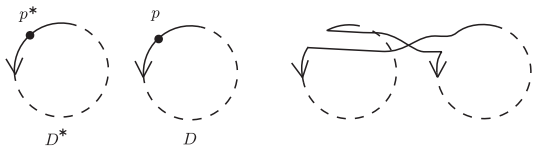} }
\caption{Replacement of  Gauss diagams}\label{fggaussrep}
\end{figure}
For the Gauss diagram of $(D\amalg D^*$, $P\amalg P^*)$, suppose that $p_1, \dots, p_n$ are points in $P$ such that the point $p_{i+1}$ follows the point $p_{i}$ along the orientation of $D$ and the point $p_i^*$ in $P^*$  corresponds to the point $p_i$. 
In what follows, we denote by the same symbol $p_i$ (or $p_i^*$) for the point $p_i$ of $P$ (or $p_i^*$ of $P^*$) and the corresponding point of $G(P)$ (or $G(P^*)$). 

Let $A_i$ (or $A_i^*$) be the arc of the Gauss diagram of $G(D\amalg  D^*, P\amalg P^*)$ between two points $p_i$ and $p_{i+1}$ (or $p_i^*$ and $p_{i+1}^*$), and  the arc between two points $p_n$ and $p_{1}$ (or $p_n^*$ and $p_{1}^*$) is $A_n$ (or $A_n^*$). Note that $A_i$ corresponds to $A_i^*$. We also denote an arc of $G(\phi(D,P))$ which corresponds to  $A_i$ or $A_i^*$  of $G(D\amalg D^*, P\amalg P^*)$ by $\widetilde{A_i}$ or $\widetilde{A_i^*}$, respectively. Here $\widetilde{A_i}$ (or $\widetilde{A_i^*}$) is the arc in $G(\phi(D,P))$ which is obtained from $A_i$ (or $A_i^*$) by removing a regular neighborhood of $p_i$ and $p_{i+1}$(or  $p_i^*$ and $p_{i+1}^*$).

\begin{lem}\label{lem3}
Let $(D,P)$ be a virtual knot diagram with  points $P$. Suppose $P=\{p_1,\dots, p_{2n}\}$ for a positive integer $n$. Then  $\phi(D,P)$ is a 2-component virtual link diagram $D_1\cup D_2$. Furthermore if an arc $\widetilde{A_i}$ is in $G(\phi(D, P))|_{D_1}$ (or $G(\phi(D, P))|_{D_2}$), then $\widetilde{A_{i+1}}$ and $\widetilde{A_{i}^*}$ are in $G(\phi(D, P))|_{D_2}$ (or $G(\phi(D, P))|_{D_1}$).
\end{lem}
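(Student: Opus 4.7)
The plan is to trace $\phi(D,P)$ as an oriented 1-manifold, using the arcs $\{\widetilde{A_i}, \widetilde{A_i^*}\}_{i=1}^{2n}$ and the local replacement pictured in Figure~\ref{fgconvert2}. The decisive structural input is the \emph{strand-swap property}: across the replacement at each horizontal line $l_i$, the strand of $D$ entering a neighbourhood of $p_i$ is reconnected to the strand of $D^*$ leaving a neighbourhood of $p_i^*$, and vice versa. I would first record this property explicitly from Figure~\ref{fgconvert2}, taking care with the orientations (the alternate orientation induced by the cut system makes all cut-point neighbourhoods look locally the same, which simplifies the check).

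Next, starting on $\widetilde{A_1}\subset D$ and moving in the direction of the orientation, the replaced neighbourhood of $p_2$ carries us onto $\widetilde{A_2^*}$; iterating through $p_3^*, p_4, \ldots, p_{2n}^*, p_1$ produces the cyclic sequence
\[
\widetilde{A_1},\ \widetilde{A_2^*},\ \widetilde{A_3},\ \widetilde{A_4^*},\ \ldots,\ \widetilde{A_{2n-1}},\ \widetilde{A_{2n}^*}.
\]
Because $|P|=2n$ is even by Corollary~\ref{cor1}, after exactly $2n$ swaps we return to $\widetilde{A_1}$, closing a single component $D_1$. The complementary arcs
\[
\widetilde{A_1^*},\ \widetilde{A_2},\ \widetilde{A_3^*},\ \widetilde{A_4},\ \ldots,\ \widetilde{A_{2n-1}^*},\ \widetilde{A_{2n}}
\]
close up into a second component $D_2$ by the same reasoning, and together these lists exhaust all arcs of $\phi(D,P)$; hence $\phi(D,P)=D_1\cup D_2$ has exactly two components.

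The alternation claim then follows by inspection of these explicit lists. If $\widetilde{A_i}\subset D_1$, the listing of $D_1$ forces $i$ to be odd; then $i+1$ is even, so $\widetilde{A_{i+1}}$ is not in the $D_1$ list, while $\widetilde{A_i^*}$ appears in the $D_1$ list only for $i$ even, so both must lie in $D_2$. The case $\widetilde{A_i}\subset D_2$ is symmetric. The one delicate point is extracting the strand-swap rule uniformly from Figure~\ref{fgconvert2}; once this is established, all subsequent steps are combinatorial bookkeeping that goes through identically regardless of the particular alternating orientation at each cut point, and the evenness of $|P|$ from Corollary~\ref{cor1} is what produces exactly two components (rather than one).
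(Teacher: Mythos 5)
Your argument is correct, and it reaches the conclusion by a different route than the paper. The paper proves Lemma~\ref{lem3} by induction on $n$: the base case $n=1$ is read off from the Gauss diagram with one pair of cut points (Figure~\ref{fglemg1}), and the inductive step performs the replacement of Figure~\ref{fggaussrep} at the first $2n-2$ pairs, invokes the hypothesis, and then handles the last two pairs by hand (Figure~\ref{fglemg2}). You instead trace the oriented $1$--manifold directly and write down the two components in closed form, $\widetilde{A_1},\widetilde{A_2^*},\widetilde{A_3},\dots,\widetilde{A_{2n}^*}$ and its complementary list, from which both the two-component claim and the alternation claim are immediate. Both proofs rest on the same local input --- the sheet-swap at each cut-point neighbourhood, which is exactly the content of Figure~\ref{fggaussrep} --- and your identification of this as the one point requiring care is accurate. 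Your version has the advantage of exhibiting the exact partition of arcs (which is also what makes the parity argument in the proof of Theorem~\ref{thmk2} transparent), while the induction defers the global bookkeeping to the hypothesis. One small remark: you cite Corollary~\ref{cor1} for the evenness of $|P|$, but in the lemma as stated $|P|=2n$ is already a hypothesis (and the lemma is phrased for an arbitrary finite set of points, not only cut systems), so the corollary is not needed there; it is only needed when the lemma is applied to a cut system.
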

\begin{proof}
We use the induction on $n$. 
Suppose that $n=1$, i.e. $D$ is a virtual knot diagram with 2 points $p_1$ and $p_2$. The Gauss diagram $G(\phi(D, P))$ is depicted as in Figure~\ref{fglemg1}, where the bold line and the thin line indicate the different components and we dropped all chords in the figure. In this case $\phi(D, P)$ is a 2-component virtual link diagram. Two arcs $\widetilde{A_1}$  and $\widetilde{A_2^*}$ are in one component of $G(\phi(D, P))$, and $\widetilde{A_{2}}$ and $\widetilde{A_{1}^*}$ are in the other.
\begin{figure}[h]
\centering{
\includegraphics[width=8cm]{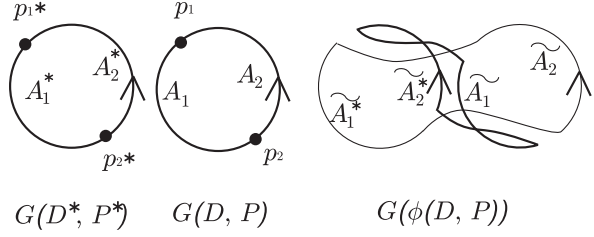} }
\caption{$G(\phi(D,P))$ of $D$ a pair of point}\label{fglemg1}
\end{figure}
Suppose that the statement is hold if the number of points is less than $2n$.
We assume that $D$ is a virtual knot diagram with $2n$ points $p_1,\dots, p_{2n}$. 
First, apply the replacement as in Figure~\ref{fggaussrep} to $2n-2$ points $p_1,\dots, p_{2n-2}$ and 
$p_1^*,\dots, p_{2n-2}^*$. Then we obtain a Gauss diagram $G$ with $4$ points $p_{2n-1}, p_{2n}$ and  $p_{2n-1}^*, p_{2n}^*$. 
By the hypothesis, the Gauss diagram $G$  is  depicted as in Figure~\ref{fglemg2} (i), where two arcs ${A_{2n-1}}$ and ${A_{2n}}$ (or two points $p_{2n-1}$ and $p_{2n}$) are in one component of $G$ and two arcs ${A_{2n-1}^*}$ and ${A_{2n}^*}$ (or two points $p_{2n-1}^*$ and $p_{2n}^*$) are in  the other. If an arc $\widetilde{A_i}$ is in  one component of $G$,  $\widetilde{A_i^*}$ (or $\widetilde{A_{i+1}}$) is in the other for  $i\ne 2n-1$ by the induction hypothesis.  
By applying the replacement in Figure~\ref{fggaussrep} to two pairs of points $p_{2n-1}$ and $p_{2n-1}^*$ and $p_{2n}$ and $p_{2n}^*$ of  the Gauss diagram $G$, we have a Gauss diagram as in Figure~\ref{fglemg2} (ii). Therefore we have the result.
\begin{figure}[h]
\centering{
\includegraphics[width=12cm]{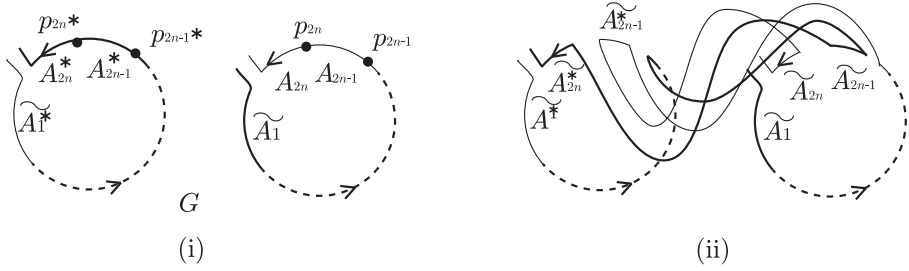} }
\caption{Replacement of  Gauss diagams}\label{fglemg2}
\end{figure}
\end{proof}
%

%Thus we have the following lemma
%\begin{lem}\label{thm5}
%Let $(D_1, P_1)$ and $(D_2, P_2)$ be  virtual knot diagrams with cut systems. If  $D_1$ and $D_2$ are equivalent (or K-equivalent), then the linking number of $\phi(D_2,P_2)$ is equal to that of $\phi(D_1,P_1)$.
%\end{lem}

\begin{proof}[Proof of Theorem~\ref{thmk1}]
Let $(D,P)$ be a virtual knot diagram with a cut system. By Lemma~\ref{lem3}, $\phi(D,P)$ is a 2-component virtual link diagram. Let $(D', P')$ be a virtual knot diagram with a cut system such that $D$ and $D'$ present the same virtual knot. 
By Theorem~\ref{thm2}, $\phi(D, P)$ and $\phi(D', P')$ are equivalent. Hence they have the same linking numbers.
\end{proof}
%For a 2-component link diagram $D$, we denote the linking number of $D$ by $\mathrm{lk}(D)$.
For a virtual knot $K$ and its diagram $D$ with the cut system $P$, we denote  the linking number of $\phi(D,P)$ by $\mathrm{lk}_N(K)$ or $\mathrm{lk}_N(D)$. 
Theorem~\ref{thmk2} states that $\mathrm{lk}_N(D)$ is equal to the odd writhe of $D$.

Let $D$ be a virtual knot diagram and $G$ be a Gauss diagram of $D$. 
For a classical crossing $c$, we denote by $\gamma_c$ the chord of $G$ corresponding to $c$. The endpoints of $\gamma_c$ divides the circle of $G$ into 2 arcs. We denote  the arcs by $I_c$ and $I'_c$ where $I_c$ is the arc which starts from the tail of $\gamma_c$ and terminates at the head. 
A classical crossing  $c$ of $D$ is said to be   {\it odd} if there are an odd number of endpoints of chords of $G$ on $I_c$. The {\it odd writhe} of $D$ is the sum of signs of odd crossings of $D$．Note that if a virtual knot diagram is normal, all classical crossings are not odd and hence the odd writhe is zero.

%\begin{thm}[\cite{rkauE}]\label{odd1}
It is shown in \cite{rkauE} that 
the odd writhe is an invariant of virtual knots.
%\end{thm}

\begin{proof}[Proof of Theorem~\ref{thmk2}]
Let $D$ be a virtual knot diagram and $P$ be a cut system of $D$.  
It is sufficient to show that 
odd crossings of $D$  correspond  to nonself classical crossings  of $\phi(D,P)$. 
Since $(D,P)$ admits an alternate orientation, the circle of the Gauss diagram $G(D,P)$  of $(D,P)$ admits an alternate orientation such that  one endpoint of each chord is 
a sink of the orientations and the other is a source. This implies the following condition: 
\begin{itemize}
\item[($\ast$)] 
For any classical crossing $c$ of $(D, P)$,
 the sum of the number of cut points on the arc $I_c$ and that of endpoints of chords appearing on $I_c$, is even. 
\end{itemize} 
%suppose that $p_1,\dots, p_n$ are points in $P$ such that the point $p_i $ follows the point $p_{i+1}$ along the orientation of $D$. 
Let $A_1, A_2, \dots $ be the arcs obtained by cutting the circle of $G(D,P)$ along the cut points.  We assume that 
 the arc $A_{i+1}$ appears after the arc $A_{i}$ along the orientation of $D$. 
We also denote   by $\widetilde{A_i}$ the arc of $G(\phi(D,P))|_D$ which corresponds to $A_i$. 
For a classical  crossing $c$ of $(D, P)$, the classical crossing corresponding to $c$ in $\phi(D, P)|_D$ is denoted by $\tilde{c}$. 
Let $c$ be  an odd crossing of $(D, P)$. 
Suppose that  one endpoint of  $\gamma_c$  
is on $A_k$ and the other endpoint  is on $A_j$. By definition, there are an odd number of endpoints of chords on $I_c$ in  $G(D,P)$. 
By the condition ($\ast$) above, we see that there are an odd number of cut points on $I_c$ in  $G(D,P)$.  
Then we see that $k$ is not congruent to  $j$ modulo $2$.  By Lemma~\ref{lem3} $\widetilde{A_k}$ and $\widetilde{A_j}$ in  $G(\phi(D,P))|_D$ are in distinct components of $G(\phi(D,P))$. Thus the classical crossing $\tilde{c}$ is a nonself classical crossing of $\phi(D,P)$. By a similar argument, we see that if $c$ is not an odd crossing of $(D,P)$, then $\tilde{c}$ is a self classical crossing of $\phi(D,P)$.
\end{proof}

We show some properties of $\mathrm{lk}_N(D)$. They are also obtained from the properties of the odd writhe.

\begin{cor}[\cite{rkauE}]\label{prop3}
Let $K$ be a normal virtual knot. Then $\mathrm{lk}_N(K)$ is zero.
\end{cor}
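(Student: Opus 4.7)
The plan is to leverage the identification $\mathrm{lk}_N(D) = w_o(D)$ (odd writhe) established in Theorem~\ref{thmk2}, together with the well-definedness provided by Theorem~\ref{thmk1}, and then reduce to the fact already noted in the paper that the odd writhe of a normal diagram vanishes.

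Concretely, since $K$ is a normal virtual knot, by definition it admits a normal diagram $D$ as a representative. Choose any cut system $P$ of $D$ (for example the canonical one). By Theorem~\ref{thmk1}, the quantity $\mathrm{lk}_N(K) = \mathrm{lk}_N(D) = \mathrm{lk}(\phi(D,P))$ does not depend on the choice of representative or cut system, so we may compute it using this particular normal $D$. By Theorem~\ref{thmk2}, $\mathrm{lk}_N(D)$ equals the odd writhe of $D$.

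It therefore suffices to verify that the odd writhe of a normal diagram is zero. This is the observation recorded just before Theorem~\ref{thmk2}: normality of $D$ is equivalent to $D$ admitting an alternate orientation, so the circle of $G(D)$ inherits an alternate orientation in which every chord endpoint is either a source or a sink and the two endpoints of each chord $\gamma_c$ have opposite type. Reading off the parity along either arc $I_c$ determined by $\gamma_c$, the number of chord endpoints on $I_c$ must be even, so no classical crossing is odd and $w_o(D)=0$.

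The argument is essentially a direct concatenation of the two theorems just proved, so no real obstacle is expected; the only point requiring slight care is to make sure the definition of $\mathrm{lk}_N(K)$ as a function of the virtual knot class (rather than of a particular diagram-with-cut-system) is used, which is exactly the content of Theorem~\ref{thmk1}.
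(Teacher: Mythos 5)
Your argument is correct, but it takes a different route from the paper. You reduce the statement to Theorem~\ref{thmk2} (the identification of $\mathrm{lk}_N$ with the odd writhe) plus the observation that a normal diagram has no odd crossings, which you justify via the alternate orientation exactly as in condition $(\ast)$ of the paper's proof of Theorem~\ref{thmk2} specialized to $P=\emptyset$. The paper instead argues directly: since $D_N$ is normal, the \emph{empty set} is a cut system for it, and $\phi(D_N,\emptyset)$ is literally the disjoint union $D_N\amalg D_N^*$, which has no nonself crossings at all, so its linking number is zero; Theorem~\ref{thmk1} then gives independence of the choices. The paper's computation is shorter and does not invoke Theorem~\ref{thmk2}; your version is slightly heavier but makes transparent why the corollary is attributed to Kauffman's odd-writhe paper, since under Theorem~\ref{thmk2} it is precisely the classical fact that the odd writhe of a checkerboard-colorable knot vanishes. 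One small point worth making explicit in your write-up: the convenient choice $P=\emptyset$ is available only because $D$ is normal (for a general diagram the empty set is not a cut system), which is the one place the normality hypothesis enters the paper's proof, whereas in yours it enters through the parity count on $I_c$.
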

\begin{proof}
Let $D_N$ be a normal knot diagram of $K$. 
The empty set $\emptyset$ is a cut system.
The virtual link diagram $\phi(D_N,\emptyset)$ is the disjoint union, $D_N \amalg D_N^{*}$ of $D_N$ and $D^*_N$. Thus $\mathrm{lk}_N(K)$ is zero.
\end{proof}
Let $D$ be a virtual knot diagram. The virtual knot diagram obtained from $D$ by switching the over-under information of all classical crossing (or by reflection) is denoted by $D^{\sharp}$ (or $D^{\dag}$).
\begin{cor}[\cite{rkauE}]\label{prop8}
Let $D$ be a virtual knot diagram. If $\mathrm{lk}_N(D)$ is not zero, then $D$ is not equivalent to $D^{\sharp}$ (or $D^{\dag})$. 
\end{cor}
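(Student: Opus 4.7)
The plan is to deduce this corollary directly from Theorem~\ref{thmk2} together with two sign-reversal identities for the odd writhe, which I will abbreviate by $J$: namely $J(D^{\sharp}) = -J(D)$ and $J(D^{\dag}) = -J(D)$. Once these are in place, the conclusion is immediate: by Theorem~\ref{thmk2}, $\mathrm{lk}_N(D^{\sharp}) = J(D^{\sharp}) = -J(D) = -\mathrm{lk}_N(D)$, so the hypothesis $\mathrm{lk}_N(D)\neq 0$ forces $\mathrm{lk}_N(D^{\sharp}) \neq \mathrm{lk}_N(D)$; since $\mathrm{lk}_N$ is a virtual knot invariant by Theorem~\ref{thmk1}, $D$ and $D^{\sharp}$ cannot represent the same virtual knot. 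The argument for $D^{\dag}$ is the same.

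To establish $J(D^{\sharp}) = -J(D)$, I would argue on the Gauss diagram $G$ of $D$, with, say, $n$ chords. The operation $D\mapsto D^{\sharp}$ simultaneously negates the sign of every chord and reverses the direction of every chord, so for each classical crossing $c$ the two arcs $I_c$ and $I'_c$ are interchanged. The key observation is a parity identity: $I_c\cup I'_c$ contains exactly $2(n-1)$ chord endpoints (coming from the remaining $n-1$ chords), so the numbers of endpoints on $I_c$ and on $I'_c$ have the same parity. Consequently, the set of odd crossings is the same for $D$ and for $D^{\sharp}$, while every sign is flipped, and summing over the (unchanged) set of odd crossings yields $J(D^{\sharp}) = -J(D)$.

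For $J(D^{\dag}) = -J(D)$ I would give the parallel argument: reflection negates every crossing sign and reverses the orientation of the underlying circle of $G$, which again swaps $I_c$ with $I'_c$ for each $c$; by the same parity observation, the set of odd crossings is unchanged while every sign is negated, so the signed sum flips.

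The proof is essentially bookkeeping, and the only point requiring care is the parity identity $|I_c\cap(\text{endpoints})| \equiv |I'_c\cap(\text{endpoints})|\pmod 2$ that underlies both sign-reversals; once that is noted, no further obstacle is anticipated.
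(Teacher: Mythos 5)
Your proof is correct. The paper's own proof is a one-line assertion that $\mathrm{lk}_N(D^{\sharp})=\mathrm{lk}_N(D^{\dag})=-\mathrm{lk}_N(D)$, i.e.\ it works directly with the linking number of the covering diagram (switching all crossings, or reflecting, negates the sign of every nonself crossing of $\phi(D,P)$, hence negates the linking number). You instead route through Theorem~\ref{thmk2} and prove the sign-reversal identities for the odd writhe $J$ on the Gauss diagram; the paper itself remarks just before the corollary that these properties ``are also obtained from the properties of the odd writhe,'' so your route is the second of the two it has in mind, and it has the merit of actually supplying the bookkeeping (in particular the parity identity $|I_c|\equiv|I'_c|\pmod 2$, which is exactly the right observation) that the paper dismisses as clear. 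One small inaccuracy: reflection does not reverse the orientation of the Gauss circle --- the knot keeps its orientation, so the circle of $G$ and the chord directions are unchanged and only the signs flip --- but this is harmless, since your parity identity makes the set of odd crossings invariant whether or not $I_c$ and $I'_c$ are interchanged, so the conclusion $J(D^{\dag})=-J(D)$ stands either way.
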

\begin{proof}
It is clear that  $\mathrm{lk}_N(D^{\sharp})=\mathrm{lk}_N(D^{\dag})=-\mathrm{lk}_N(D)$. This implies the result.
\end{proof}

For example, the virtual knot presented by the diagram $D$ in Figure~\ref{fgexapli1}, is not normal by Corollary~\ref{prop3}, since $\mathrm{lk}_N(D)=-2$. By Corollary~\ref{prop8},  $D$ is not equivalent to $D^{\sharp}$ (or $D^{\dag})$.
\begin{figure}[h]
\centering{
\includegraphics[width=12cm]{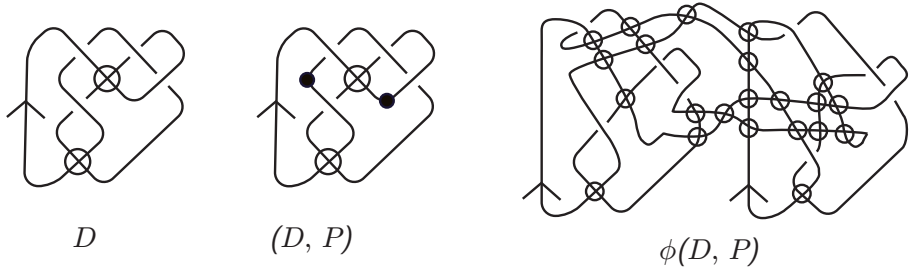} }
\caption{A virtual knot which is not normal}\label{fgexapli1}
\end{figure}
\subsection{Linking invariants of even virtual links}
H. Miyazawa K. Wada and A. Yasuhara defined an invariant of even virtual links by use of the virtual orientation. 
We give another definition of their invariant. 

Let $D$ be a virtual link diagram. $D$ is said to be {\it even}, if on each circle of a Gauss diagram of $D$ there is an even number of endpoints of chords. A virtual knot diagram is even. If a virtual link diagram $D$ is even, any virtual link diagram which is equivalent to $D$ is even. 
Note that when we go around  each component of an even virtual link diagram, we meet an even number of virtual crossings. A normal virtual link diagram is even, and any a virtual link diagram which presents a normal virtual link is even.

Let $D$ be an ordered unoriented even $r$-component virtual link diagram such that $D=D^1\cup \dots \cup D^r$ where $D^i$ is a component of $D$. The {\it virtual orientation} is an orientation of $D$ such that 
they switch an opposite direction at each virtual crossing of $D$ as in Figure~\ref{fg:virorientation}. 
(The orientation is not switched at classical crossings.)

\begin{figure}[h]
\centering{
\includegraphics[width=4cm]{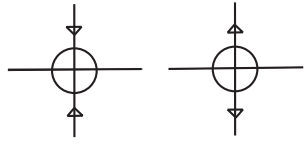} }
\caption{Virtual orientation}\label{fg:virorientation}
\end{figure}

We give $D$ a virtual orientation ${\cal O}_v(D)$. 
The sum of signs of classical crossings between $D^i$ and $D^j$ whose overpass of $D^i$ is denoted by $\lambda_{D}(i,j)$ under the virtual orientation ${\cal O}_v(D)$. 

\begin{thm}[\cite{rMWY}]\label{thmLinkInv}
$|\lambda_{D}(i,j)|$ is an invariant of ordered unoriented even virtual links.
\end{thm}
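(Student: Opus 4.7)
The plan is to establish two facts: (a) the quantity $|\lambda_D(i,j)|$ is well-defined on each even diagram $D$, independently of the choice of virtual orientation ${\cal O}_v(D)$; and (b) it is invariant under each of the generalized Reidemeister moves.

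For (a), evenness guarantees that the switching rule at virtual crossings closes up consistently on every component, so a virtual orientation exists; the only freedom is a global choice of initial direction on each component. Reversing the chosen direction on $D^k$ reverses the direction along every classical strand of $D^k$. Since the sign of a classical crossing is the signed determinant of the ordered pair (over-direction, under-direction), reversing exactly one of the two factors flips the sign. Hence reversing the orientation of $D^i$ (or of $D^j$) sends $\lambda_D(i,j)$ to $-\lambda_D(i,j)$, while reversal of any third component leaves $\lambda_D(i,j)$ unchanged. Therefore $|\lambda_D(i,j)|$ depends only on $D$.

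For (b), I would examine the seven moves in turn. The classical Reidemeister moves R1, R2, R3 involve no virtual crossing, so the virtual orientation agrees locally with a usual orientation and the standard arguments for classical linking numbers apply: R1 affects only self-crossings, which do not enter $\lambda_D(i,j)$; R2 introduces two crossings with the same overstrand but opposite signs, whose contributions to $\lambda_D(i,j)$ cancel; R3 preserves the signs and over/under data of all three involved crossings. The virtual moves VRI, VRII, VRIII do not create, remove, or move any classical crossing; the key point is that each alters the number of virtual crossings along each strand by an even amount, so the parity of virtual crossings between any chosen basepoint of a component and any classical crossing is preserved, and every virtual orientation at every classical crossing is unchanged.

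The only delicate step, and the main obstacle, is VRIV, which slides a classical crossing between strands $a$ and $b$ past two virtual crossings involving a third strand $c$. The essential observation is that one of the two virtual crossings lies on $a$ (with $c$) and the other on $b$ (with $c$), so each of $a$ and $b$ gains or loses exactly one virtual crossing in the segment between the basepoint and the classical crossing. Consequently both strand directions at the classical crossing are reversed simultaneously after the move. Since the sign of the classical crossing is the determinant of the pair of strand directions, it is preserved under this simultaneous reversal, and $\lambda_D(i,j)$ is unchanged. Combining the seven cases yields the invariance.
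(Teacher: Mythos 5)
Your argument is correct, but note that the paper itself offers no proof of this statement: it is quoted from the Miyazawa--Wada--Yasuhara preprint \cite{rMWY}, so there is no internal proof to compare against. Your two-step plan is the natural direct one and it goes through. Part (a) is right: evenness (equivalently, an even number of virtual-crossing encounters per component) makes the virtual orientation exist and unique up to a global reversal on each component, and reversing $D^i$ or $D^j$ negates every crossing sign between them while leaving the over/under data and hence the set of contributing crossings unchanged. Part (b) is also sound, and you correctly identify the only nontrivial case: for the classical moves the virtual orientation is locally an honest orientation and the usual linking-number cancellations apply; for VRI--VRIII each strand's virtual-crossing count inside the move disk changes by an even amount, so the chosen orientation extends compatibly and no classical crossing is touched; and for VRIV the single virtual crossing on each of the two classical strands migrates past the classical crossing, reversing \emph{both} local directions there, which preserves the sign since $\det(-u,-v)=\det(u,v)$. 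The one point worth stating explicitly is that the parity-preservation inside the move disk is exactly what guarantees the old virtual orientation extends to one on the new diagram, so that part (a) can then be invoked; you do gesture at this. For comparison, the machinery this paper develops itself is the parallel quantity $|\nu_D(i,j)|$ defined via cut orientations, whose well-definedness (Proposition~\ref{cutori1}) is proved by checking the three cut point moves rather than the seven Reidemeister moves, and whose invariance is then deduced \emph{from} Theorem~\ref{thmLinkInv} via Theorem~\ref{thmLinkInvCut}; your proof supplies the missing foundation directly, which arguably makes the paper's logic self-contained.
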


\begin{prop}
The number of cut points of each component of an even virtual link diagram with a cut system is even.
\end{prop}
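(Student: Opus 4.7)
The plan is to use the alternate orientation characterization of cut systems, applied one component at a time, and to convert the evenness hypothesis into a parity statement on the flips of this orientation.

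First, I will fix an even virtual link diagram $D=D^1\cup\cdots\cup D^r$ together with a cut system $P$, and pick a component $D^i$. By the characterization recalled just after Figure~\ref{fgalternateorip}, $(D,P)$ admits an alternate orientation: an orientation of $D$ whose direction along any edge of $|D|$ is reversed precisely at each classical crossing and at each cut point (virtual crossings do not flip the direction). Restricting this orientation to the topological circle underlying $D^i$ gives a locally defined direction on $D^i$ that reverses exactly at the points of $D^i$ lying over classical crossings and at the cut points of $P$ that sit on $D^i$.

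Next, I will invoke the consistency of orientations on a circle: travelling once around $D^i$ and returning to the starting point, the total number of reversals must be even. This number splits as
\[
N_{\mathrm{cr}}(D^i)+N_{\mathrm{cut}}(D^i),
\]
where $N_{\mathrm{cr}}(D^i)$ counts the times the path meets a classical crossing (self-crossings contributing twice, non-self crossings contributing once) and $N_{\mathrm{cut}}(D^i)$ counts the cut points lying on $D^i$. The quantity $N_{\mathrm{cr}}(D^i)$ is exactly the number of chord endpoints on the circle of the Gauss diagram of $D$ corresponding to $D^i$, which is even by the definition of an even diagram. Since the sum $N_{\mathrm{cr}}(D^i)+N_{\mathrm{cut}}(D^i)$ is even and $N_{\mathrm{cr}}(D^i)$ is even, $N_{\mathrm{cut}}(D^i)$ is even as well.

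The only subtle point — and the one I would be most careful about — is the bookkeeping that turns the evenness hypothesis (even number of chord endpoints on each circle of the Gauss diagram) into the statement that the number of classical-crossing visits along $D^i$ is even, correctly handling self-crossings (contributing two endpoints on the same circle) versus non-self crossings (contributing one endpoint each on two different circles). Once this bookkeeping is made explicit, the argument is purely a parity count and the conclusion is immediate; the whole proof should fit in a short paragraph with one clarifying picture, and does not require any appeal to cut point moves or to the particular form of $P$.
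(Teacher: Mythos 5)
Your proof is correct, but it follows a genuinely different route from the paper's. The paper first verifies the claim for the canonical cut system $P_0$ (going around a component of an even diagram one meets an even number of virtual crossings, hence an even number of canonical cut points), and then transports the conclusion to an arbitrary cut system via Theorem~\ref{thmc1}, observing that each cut point move changes the number of cut points on a given component by $0$, $\pm 2$ or $\pm 4$. You instead argue directly on the given cut system: by the characterization recalled after Figure~\ref{fgalternateorip}, an alternate orientation of $(D,P)$ reverses exactly at classical-crossing passages and at cut points (and not at virtual crossings), so a full traversal of $D^i$ must see an even total number of reversals; the classical-crossing passages along $D^i$ are in bijection with the chord endpoints on the $i$-th circle of the Gauss diagram, which is even by the definition of an even diagram, so the number of cut points on $D^i$ is even. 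Your bookkeeping of self-crossings (two endpoints on the same circle) versus non-self crossings (one endpoint each on two circles) is exactly right and is the only place care is needed. The trade-off: your argument needs no appeal to the classification of cut systems up to cut point moves and handles an arbitrary $P$ in one step, whereas the paper's argument stays entirely within the move calculus it has already built and never invokes the alternate-orientation criterion; both ingredients are available in the paper, so either proof is legitimate.
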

\begin{proof}
Let $(D,P_0)$ be even virtual link diagram $D$ with the canonical cut system $P_0$. 
Each component of $D$ has an even number of cut points, since we meet an even number virtual crossings when we go around each component of it. Any cut system of $D$ is obtained from $P_0$ by 
cut point moves I, II and III. Then we have the same result in another cut system of $D$ since by a cut point move II or  III  changes the number of cut points on a component of $D$ by $\pm 2$ or $\pm 4$. 
\end{proof}

Let $D$ be an even  $r$-component virtual link diagram such that $D=D^1\cup \dots \cup D^r$ where $D^i$ is a component of $D$. Suppose that $P$ is a cut system of $D$.
A {\it cut orientation} is an orientation of $(D, P)$ such that they switch an opposite direction at each cut point of $D$ as in Figure~\ref{fg:cutorientation}. We denote it ${\cal O}_c(D,P)$. (For a cut orientation, the orientation is not switched at classical crossings and virtual crossings.)

\begin{figure}[h]
\centering{
\includegraphics[width=4cm]{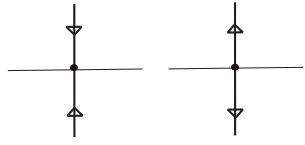} }
\caption{Cut orientation}\label{fg:cutorientation}
\end{figure}

Let $(D,P)$ be even virtual link diagram $D$ with a cut system $P$ such that $D=D^1\cup \dots \cup D^r$ where $D^i$ is a component of $D$. 
We give $D$ a cut orientation ${\cal O}_c(D,P)$. 
The sum of signs of classical crossings between $D^i$ and $D^j$ whose overpass of $D^i$ is denoted by $\nu_{(D, P)}(i,j)$ under the cut orientation ${\cal O}_c(D,P)$. 

\begin{prop}\label{cutori1}
For two cut systems $P$ and $P'$ of $D$, let ${\cal O}_c(D,P)$ (or ${\cal O}_c(D,P')$) be a cut orientation 
of $(D,P)$ (or $(D,P')$). 
Then we have $|\nu_{(D, P)}(i,j)|=|\nu_{(D, P')}(i,j)|$.
\end{prop}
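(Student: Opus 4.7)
The plan is to invoke Theorem~\ref{thmc1}, which says that any two cut systems $P$ and $P'$ of $D$ are related by a sequence of cut point moves I, II and III; hence it suffices to prove the equality when $P'$ is obtained from $P$ by a single such move. Before checking the moves, I would record an orientation ambiguity: a cut orientation ${\cal O}_c(D,P)$ is fixed only after choosing, on each component $D^k$, a local orientation at a basepoint. Reversing that choice on $D^i$ flips the sign of every classical crossing incident to $D^i$, and therefore multiplies $\nu_{(D,P)}(i,j)$ by $-1$; similarly for $D^j$. Thus $|\nu_{(D,P)}(i,j)|$ is independent of orientation choices, and it is enough to exhibit, for each single cut point move, cut orientations of $(D,P)$ and $(D,P')$ for which $\nu_{(D,P)}(i,j)=\nu_{(D,P')}(i,j)$.

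For this, I would pick a basepoint on each component of $D$ far from the location of the move and set the initial local orientations of ${\cal O}_c(D,P)$ and ${\cal O}_c(D,P')$ to agree there. Each orientation then extends uniquely around its component by flipping at cut points. An inspection of Figure~\ref{fgcutptmove} shows that every cut point move changes the number of cut points on each component by an even number (zero for move I, $\pm 2$ for move II, and $\pm 2$ on each affected component for move III, which is either a $+4$ contribution to a single component or a $+2$ contribution to each of two components). Consequently both extensions are consistent, and outside a small disc around the site of the move the two cut orientations agree edge by edge.

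What remains is a local check for each move. For moves I and II, the two cut orientations disagree only on a short sub-arc lying in a small neighborhood of a virtual crossing or of a point on an edge, and this sub-arc contains no classical crossing of $D$; hence every classical crossing contributes identically to both $\nu_{(D,P)}(i,j)$ and $\nu_{(D,P')}(i,j)$. For move III the two cut orientations disagree on the four short arcs meeting at a single classical crossing $c$; the orientations of both strands at $c$ are simultaneously reversed, and since the sign of a classical crossing is invariant under simultaneous reversal of both of its strand orientations, the sign of $c$ is unchanged and all other classical crossings again contribute identically. Thus in every case $\nu_{(D,P)}(i,j)=\nu_{(D,P')}(i,j)$ for the chosen orientations, which gives $|\nu_{(D,P)}(i,j)|=|\nu_{(D,P')}(i,j)|$. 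The most delicate point is the parity bookkeeping that justifies extending the basepoint orientations consistently around each component in both cut systems, together with the simultaneous-reversal observation at $c$ in move III; once these are handled, no further computation is needed.
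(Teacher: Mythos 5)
Your proof is correct and follows essentially the same route as the paper: reduce to a single cut point move via Theorem~\ref{thmc1}, then compare the two cut orientations locally and observe that every classical crossing between $D^i$ and $D^j$ keeps its sign (up to a uniform global negation), so $|\nu_{(D,P)}(i,j)|$ is unchanged. The paper delegates the local case analysis to Figure~\ref{fg:cutoricutmove}; your basepoint/parity bookkeeping and the simultaneous-strand-reversal observation at the move~III crossing simply make that same check explicit.
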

\begin{proof}
Suppose that $P'$ is obtained form $P$ by one of cut point moves I, II and III to a component $D^i$ (or two  components $D^i$ and $D^k$) of $D$ as in Figure~\ref{fg:cutoricutmove} (i), (ii) (or (iii)). 
The cases of a cut point moves I (or II), we give a cut orientation ${\cal O}_c(D,P)$ to $(D, P)$ as in Figure~\ref{fg:cutoricutmove} (i) (or  (ii)) and ${\cal O}_c(D,P')$ to $(D, P')$ as in Figure~\ref{fg:cutoricutmove} (i) (or  (ii))  (a) or (b).
In both cases, if a sign of each classical crossing $c$ between $D^i$ and $D^j$ of $D$ with respect to ${\cal O}_c(D,P)$ is $\epsilon$,  the sign of $c$ with respect to ${\cal O}_c(D,P')$ is $\pm\epsilon$.
The case of a cut point moves III, we give a cut orientation ${\cal O}_c(D,P)$ to $(D, P)$ as in Figure~\ref{fg:cutoricutmove} (iii)  and ${\cal O}_c(D,P')$ to $(D, P')$ as in Figure~\ref{fg:cutoricutmove} (iii) (a), (b), (c) or (d).
In each case, if a sign of each classical crossing $c$ between $D^i$ and $D^j$ of $D$ with respect to ${\cal O}_c(D,P)$ is $\epsilon$,  the sign of $c$ with respect to ${\cal O}_c(D,P')$ is $\pm \epsilon$.
Thus we have the result.

\begin{figure}[h]
\centering{
\includegraphics[width=14cm]{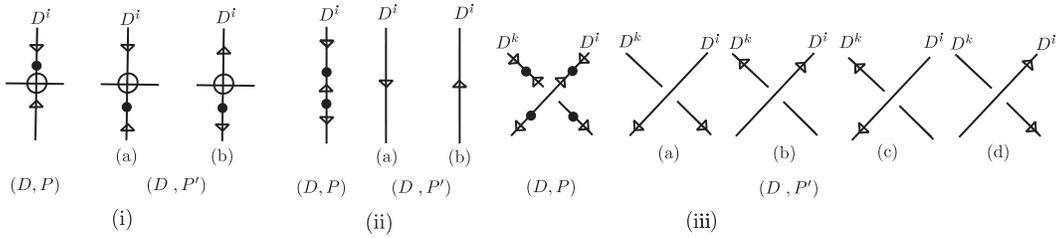} }
\caption{Cut orientation  and cut moves}\label{fg:cutoricutmove}
\end{figure}

\end{proof}

We denote $|\nu_{(D, P)}(i,j)|$ by $|\nu_{D}(i,j)|$. 
\begin{thm}\label{thmLinkInvCut}
$|\nu_{D}(i,j)|$ coincides to $|\lambda_{D}(i,j)|$. Thus 
$|\nu_{D}(i,j)|$ is an invariant of ordered unoriented virtual link.
\end{thm}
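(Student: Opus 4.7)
The plan is to reduce the computation of $\nu_D(i,j)$ to the canonical cut system $P_0$ of $D$, and then to verify that, with this particular choice, the cut orientation $\mathcal{O}_c(D,P_0)$ and the virtual orientation $\mathcal{O}_v(D)$ assign the same sign to every classical crossing. First I would invoke Proposition~\ref{cutori1}, which tells us that $|\nu_{(D,P)}(i,j)|$ is independent of the cut system, so it suffices to work with $P_0$.

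Next I would compare $\mathcal{O}_v(D)$ and $\mathcal{O}_c(D,P_0)$ locally. The canonical cut system places two cut points in a small neighborhood of each virtual crossing of $D$, one on each of the two strands meeting at that crossing (this is how Figure~\ref{fgVtoCC} is set up, and it is what makes $P_0$ a cut system in Proposition~\ref{propcan}). The virtual orientation rule flips the orientation across each virtual crossing, while the cut orientation rule flips the orientation across each cut point. After choosing the starting orientations on a reference arc of each component $D^i$ to match, the two orientations then coincide on every edge except possibly on the short sub-arc lying between a virtual crossing and the canonical cut point that sits next to it on the same strand.

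Since every classical crossing is disjoint from these short sub-arcs, the two orientations induce the same local orientation on each strand at every classical crossing. Consequently the sign of each classical crossing, and the identification of which strand is the overpass, are the same for $\mathcal{O}_v(D)$ and $\mathcal{O}_c(D,P_0)$. This yields $\nu_{(D,P_0)}(i,j) = \lambda_D(i,j)$, and hence $|\nu_D(i,j)| = |\nu_{(D,P_0)}(i,j)| = |\lambda_D(i,j)|$. Invariance of $|\nu_D(i,j)|$ under ordered unoriented virtual link equivalence is then immediate from Theorem~\ref{thmLinkInv}.

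The main obstacle is the local verification in the second step: one has to check carefully that the two canonical cut points at each virtual crossing really do lie one on each of the two strands of the crossing (rather than, for example, both on the same strand), so that the cut-orientation flips produced by the nearby cut points reproduce, strand by strand, the virtual-orientation flips at the virtual crossings themselves. Once this local matching is established, the rest of the argument is routine bookkeeping with signs.
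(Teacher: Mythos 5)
Your proposal is correct and follows essentially the same route as the paper: the paper's proof also works with the canonical cut system $P_0$ (relying on Proposition~\ref{cutori1} for independence of the cut system) and observes, via Figure~\ref{fgvircutcors}, that a cut orientation of $(D,P_0)$ can be chosen to agree with the virtual orientation on each arc, so the crossing signs coincide and invariance follows from Theorem~\ref{thmLinkInv}. You have merely spelled out explicitly the local matching near each virtual crossing (one canonical cut point on each strand) that the paper delegates to the figure.
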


\begin{proof}
Let $D$ be an ordered unoriented even  $r$-component virtual link diagram such that $D=D^1\cup \dots \cup D^r$ where $D^i$ is a component of $D$. 
We give a virtual orientation of $D$, ${\cal O}_v(D)$ to $D$. Let $P_0$ be the canonical cut system of $D$. There is a cut orientation of $(D, P_0)$ 
whose orientation of each arc of $D$  corresponds to that of $D$ in ${\cal O}_v(D)$ as in Figure~\ref{fgvircutcors}.
\end{proof}
\begin{figure}[h]
\centering{
\includegraphics[width=6cm]{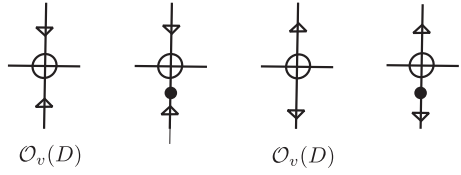} }
\caption{Virtual orientation  and cut orientation}\label{fgvircutcors}
\end{figure}

We give an interpretation of the linking invariant $|\lambda_{D}(i,j)| (=|\nu_{D}(i,j)|)$ in terms of our covering method. 
Let $(D, P)$ be an even  $r$-component virtual link diagram with a cut system $P$ such that $D=D^1\cup \dots \cup D^r$ where $D^i$ is a component of $D$. 
We give an cut orientation ${\cal O}_c(D,P)$ to $D$. Let $(D^*, P^*)$ be a copy of $(D, P)$ and ${\cal O}_c(D^*,P^*)$ be the cut orientation of $(D^*, P^*)$ such that an orientation of each arc of $(D^*, P^*)$ is different from the corresponding arc of $(D, P)$ in ${\cal O}_c(D,P)$.
% Put them in the position for coherent double covering diagram. We see an example of two virtual link diagrams in Figure~\ref{fgconvert3} (i) with cut orientations. 
We take the coherent double covering diagram of $(D, P)$ in the same manner as in Figure~\ref{fgconvert2}. We call this the {\it coherent double covering diagram with a cut orientation} ${\cal O}_c(D,P)$ denoted by $\phi_{\cal{O}_c(D,P)}(D,P)$. The virtual link diagram in Figure~\ref{fgconvert3} (ii) is the coherent double covering diagram of the virtual link diagram in Figure~\ref{fgconvert3} (i) with a cut orientation $\cal{O}_c(D,P)$.
Note that the orientation of the coherent double covering diagram derived from  cut orientations  ${\cal O}_c(D,P)$ and ${\cal O}_c(D^*,P^*)$.

\begin{figure}[h]
\centering{
\includegraphics[width=5.5cm]{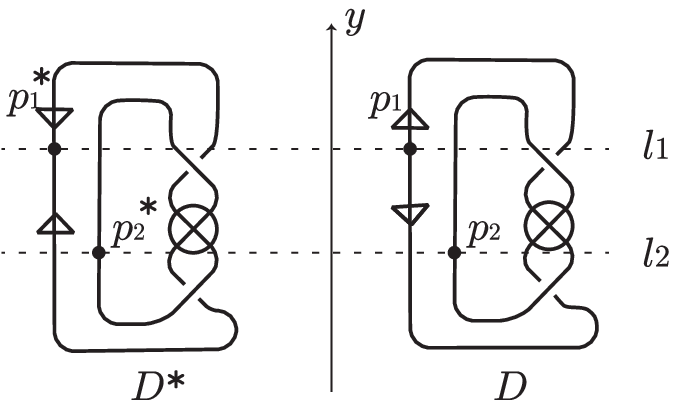}\hspace{1cm}
\includegraphics[width=4.8cm]{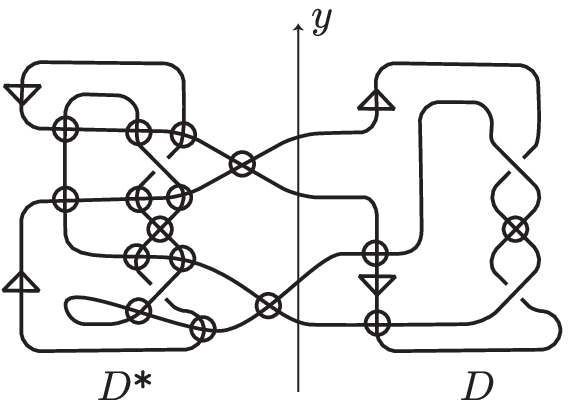}}\\
\centerline{(i)\hspace{6cm}(ii)}
%\caption{A virtual link diagram and it's refrection}
\caption{The coherent double covering of a virtual link diagram with a cut orientation}\label{fgconvert3}
\end{figure}

Let $(D, P)$ be an ordered unoriented even  $r$-component virtual link diagram with a cut system $P$ such that $D=D^1\cup \dots \cup D^r$ where $D^i$ is a component of $D$. We denote the subset of the cut system $P$ whose points are on arcs of $D^i$ by $P^i$.
A component of the Gauss diagram of $(D,P)$ corresponding a component $D^i$ with $P^i$ by $G(D^i,P^i)$. 
For the Gauss diagram of $(D\amalg D^*$, $P\amalg P^*)$, suppose that $p_1^i, \dots, p_{2n_i}^i$ (or $(p_1^i)^*, \dots, (p_{2n_i}^i)^*$) are points in $P^i$ such that the point $p_{j+1}^i$ follows the point $p_{j}^i$ along  $D^i$ and the point $(p_j^i)^*$ in $(P^i)^*$  coressponds to the point $p_j^i$. 
In what follows, we denote by the same symbol $p_j^i$ (or $(p_i^i)^*$) for the point $p_j^i$ of $P^i$ (or $(p_j^i)^*$ of $(P^i)^*$) and the corresponding point of $G(P)$ (or $G(P^*)$).

Let $A_j^i$ (or $(A_j^i)^*$) be the arc of the Gauss diagram of $G(D\amalg  D^*, P\amalg P^*)$ between two points $p_j^i$ and $p_{j+1}$ (or $(p_j^i)*$ and $(p_{j+1}^i)^*$), and  the arc between two points $p_{2n_i}^i$ and $p_{1}^i$ (or $(p_{n_i}^i)^*$ and $(p_{1}^i)^*$) is $A_{2n_i}^i$ (or $(A_{2n_i}^i)^*$). Note that $A_j^i$ corresponds to $(A_j^i)^*$. We also denote an arc of $G(\phi(D,P))$ which corresponds to  $A_j^i$ (or $(A_j^i)^*$)  of $G(D\amalg D^*, P\amalg P^*)$ by $\widetilde{A_j^i}$ (or $\widetilde{(A_j^i)^*}$), respectively. 
%Here $\widetilde{A_j^i}$ (or $\widetilde{(A_j^i)^*}$) is the arc in $G(\phi_{\cal{O}_c(D,P)}(D,P))$ which is obtained from $A_j^i$ (or $(A_j^i)^*$) by removing a regular neighborhood of $p_j^i$ and $p_{j+1}^i$(or  $(p_j^i)^*$ and $(p_{j+1}^i)^*$).

%
\begin{lem}\label{lem4}
Let $(D, P)$ be an even  $r$-component virtual link diagram with a cut system $P$ such that $D=D^1\cup \dots \cup D^r$ where $D^i$ is a component of $D$. 
Then  $\phi(D,P)$ is a 2$r$-component virtual link diagram such that  $\phi(D,P)|_{D^i}$ is a 2-component link diagram $D_1^i\cup D_2^i$. Furthermore if an arc $\widetilde{A_j^i}$ is in  $G(\phi(D, P))|_{D_1^i}$ (or $G(\phi(D, P))|_{D_2^i}$), then $\widetilde{A_{j+1}^i}$ and $\widetilde{(A_{j}^i)^*}$ are in $G(\phi(D, P))|_{D_2^i}$ 
(or $G(\phi(D, P))|_{D_1^i}$).
\end{lem}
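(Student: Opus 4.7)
The plan is to reduce Lemma~\ref{lem4} to Lemma~\ref{lem3} applied componentwise. The key observation I will exploit is the \emph{locality} of the cut-point replacement depicted in Figure~\ref{fggaussrep}: the replacement at the pair $(p_j^i, (p_j^i)^*)$ only modifies arcs of the two circles $G(D^i)$ and $G((D^i)^*)$, and never glues a strand of $D^i$ or $(D^i)^*$ to a strand of $D^k$ or $(D^k)^*$ for $k\neq i$. Consequently, on the Gauss-diagram level, $\phi(D,P)$ automatically splits as the disjoint union of the contributions from the $r$ pairs $\bigl(G(D^i),G((D^i)^*)\bigr)$, and I may treat each pair separately.

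First, I would invoke the Proposition stated just before Lemma~\ref{lem4} to secure that each component $D^i$ of the even diagram $D$ carries an even number $|P^i|=2n_i$ of cut points. This is exactly the hypothesis required to feed Lemma~\ref{lem3} into the pair $(D^i,P^i)$. Then, for each fixed $i$, I would apply Lemma~\ref{lem3} to $(D^i,P^i)$: even though $D^i$ is not literally a virtual knot (some of its classical chords may have the other endpoint on a different component $D^k$), the conclusion of Lemma~\ref{lem3} is purely a combinatorial connectivity statement about how the arcs $\widetilde{A_j^i}$ and $\widetilde{(A_j^i)^*}$ group into circles of $G(\phi(D,P))$. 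The induction carried out in the proof of Lemma~\ref{lem3} only manipulates the two circles $G(D^i)$ and $G((D^i)^*)$ and the $2n_i$ cut points living on them; where the remote endpoints of the chords sit plays no role. Thus the conclusion of Lemma~\ref{lem3} transfers verbatim: $\phi(D,P)|_{D^i}$ is a 2-component diagram $D_1^i\cup D_2^i$, and the alternation ``$\widetilde{A_j^i}$ in one component implies $\widetilde{A_{j+1}^i}$ and $\widetilde{(A_j^i)^*}$ in the other'' holds.

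Assembling the $r$ componentwise conclusions yields $2r$ components of $\phi(D,P)$ together with the required alternation on each restriction. The main obstacle I anticipate is not a genuine difficulty but a bookkeeping one: I need to verify explicitly that the inductive argument of Lemma~\ref{lem3}, which was stated for a single-component $D$, really depends only on the two Gauss circles $G(D^i)\sqcup G((D^i)^*)$ and on the local replacement rule of Figure~\ref{fggaussrep}. This should amount to inspecting the base case (Figure~\ref{fglemg1}) and the inductive step (Figure~\ref{fglemg2}) and noting that neither uses information about chords beyond which of the two circles each endpoint sits on, so the presence of additional circles $G(D^k)\sqcup G((D^k)^*)$ in the ambient Gauss diagram is invisible to the argument.
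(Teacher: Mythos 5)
Your proposal is correct and takes essentially the same route as the paper: the paper's entire proof of Lemma~\ref{lem4} is the single sentence that it follows by an argument similar to that of Lemma~\ref{lem3}, which is exactly the componentwise reduction you carry out. Your write-up actually supplies more detail than the paper does, in particular the locality observation and the appeal to the preceding proposition guaranteeing an even number of cut points on each component.
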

\begin{proof}
The proof  is obtained by a similar argument as in Lemma~\ref{lem3}. 
\end{proof}

Let $(D, P)$ be an ordered unoriented even  $r$-component virtual link diagram with a cut system $P$ such that $D=D^1\cup \dots \cup D^r$ where $D^i$ is a component of $D$. We denote the subset of the cut system $P$ whose points are on arcs of $D^i$ by $P^i$.
For a classical  crossing $c^{i,j}$ between $D^i$ and $D^j$ of $(D, P)$, the classical crossing corresponding to $c^{i,j}$ in $\phi_{\cal{O}_c(D,P)}(D, P)$ between ${D_k^i}$ and ${D_l^k}$ is denoted by $\widetilde{c^{i,j}_{k,l}}$ ($k,l\in \{1,2\}$). 

Let $c^{i,j}$ between $D^i$ and $D^j$ of $(D, P)$ such that an arc of $D^i$ is an overpass of $c^{i,j}$. Then the overpass of  a classical crossing $\widetilde{c^{i,j}_{k_1,l_1}}$ in $\phi_{\cal{O}_c(D,P)}(D, P)$ corresponding $c^{i,j}$,  is in  ${D_{k_1}^i}$. For the other classical crossing $\widetilde{c^{i,j}_{k_2,l_2}}$ corresponding $c^{i,j}$,  its overpass is in ${D_{k_2}^i}$ where $k_2 \ne k_1$ and $l_2\ne l_1$ by Lemma~\ref{lem4}. Then we have the following theorem.

\begin{thm}\label{prop18}
Let $(D, P)$ be an ordered unoriented even  $r$-component ordered unoriented virtual link diagram with a cut system $P$ such that $D=D^1\cup \dots \cup D^r$ where $D^i$ is a component of $D$. For a cut orientation $\cal{O}_c(D,P)$ of $(D, P)$, the absolute value of the sum of the classical crossings between $D_k^i$ and $D_{l_1}^j\cup D_{l_2}^j$ whose overpasses are in $D^i_k$ in  the coherent double covering diagram $\phi_{\cal{O}_c(D,P)}(D, P)$ with a cut orientation $\cal{O}_c(D,P)$ of $(D, P)$, coincides to $|\nu_D(i,j)|$.
\end{thm}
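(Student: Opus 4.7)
The plan is to count classical crossings of $\phi_{\mathcal{O}_c(D,P)}(D,P)$ directly by setting up a sign-preserving bijection with the classical crossings of $D$ between $D^i$ and $D^j$ having overpass in $D^i$. Once this is done, the signed sum on the $\phi$-side equals $\nu_{(D,P)}(i,j)$ by definition, and taking absolute values together with Proposition~\ref{cutori1} yields $|\nu_D(i,j)|$.

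First I would establish the bijection on crossings. By construction of the coherent double covering (Figure~\ref{fgconvert2}), the strand replacements occur only in neighborhoods of horizontal lines through cut points and in particular not in neighborhoods of classical crossings. Consequently every classical crossing $c^{i,j}$ of $D$ lifts to exactly two classical crossings in $\phi(D,P)$: one situated entirely inside the $D$-sheet and one entirely inside the $D^*$-sheet. By Lemma~\ref{lem4} these two lifts are precisely $\widetilde{c^{i,j}_{k_1,l_1}}$ and $\widetilde{c^{i,j}_{k_2,l_2}}$ with $k_1\neq k_2$ and $l_1\neq l_2$, so for any fixed $k\in\{1,2\}$ exactly one of the two lifts has its overpass in $D^i_k$. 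This yields the required bijection onto the set of crossings between $D^i_k$ and $D^j_{l_1}\cup D^j_{l_2}$ whose overpass lies in $D^i_k$.

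Next I would verify that the bijection preserves signs. Because the two crossing strands of any lift both lie in the same sheet, the orientation at that lifted crossing in $\phi_{\mathcal{O}_c(D,P)}(D,P)$ is either $\mathcal{O}_c(D,P)$ on both strands (when the lift sits in the $D$-sheet) or $\mathcal{O}_c(D^*,P^*)$ on both strands (when it sits in the $D^*$-sheet). In the first case, the lifted sign agrees with $\mathrm{sign}(c^{i,j})$ computed under $\mathcal{O}_c(D,P)$ by definition. In the second case, both tangent vectors are reversed relative to $\mathcal{O}_c(D,P)$; since simultaneously reversing both strand orientations at a classical crossing preserves its sign (a $180^\circ$ rotation of both tangents), the lifted sign again equals $\mathrm{sign}(c^{i,j})$ computed under $\mathcal{O}_c(D,P)$.

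Summing signs over the bijection then gives
\[
\sum_{\substack{\widetilde{c}\ \text{between}\ D^i_k\ \text{and}\ D^j_{l_1}\cup D^j_{l_2}\\ \text{overpass in}\ D^i_k}} \mathrm{sign}(\widetilde{c})
\;=\; \sum_{\substack{c^{i,j}\ \text{between}\ D^i\ \text{and}\ D^j\\\text{overpass in}\ D^i}} \mathrm{sign}(c^{i,j})
\;=\; \nu_{(D,P)}(i,j),
\]
whose absolute value is $|\nu_D(i,j)|$ by Proposition~\ref{cutori1}. The main obstacle is the sign-preservation step: the crossing bijection is essentially built into Lemma~\ref{lem4}, but one must carefully verify that the coherent double covering equips the $D^*$-sheet with the exact reverses of the $D$-sheet orientations near every classical crossing, so that the ``both tangents flipped'' argument applies and the two lifts inherit the same sign.
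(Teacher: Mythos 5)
Your proposal is correct and follows essentially the same route as the paper: the paper also sets up the sign-preserving correspondence $c^{i,j}\mapsto$ (the unique lift with overpass in $D^i_k$) via Lemma~\ref{lem4} and the observation that the lifted crossing has the same sign as $c^{i,j}$ under the cut orientation. You simply make explicit the details the paper leaves implicit, notably that reversing both strands on the $D^*$-sheet preserves crossing signs and that no new classical crossings arise from the replacement regions.
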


\begin{proof}
Let $c^{i,j}$ be a classical crossing of $D$ between $D^i$ and $D^j$ whose  overpass is in $D^i$. One of two  classical crossings of $\phi_{\cal{O}_c(D,P)}(D, P)$ corresponding to $c^{i,j}$  is between $D_k^i$ and $D_{l_1}^j\cup D_{l_2}^j$, and its overpass  is in $D_k^i$. The sign of such a classical crossing is equal to that of $c^{i,j}$.
\end{proof}

For an ordered unoriented even virtual link diagram $D=D^1\cup D^2$ with a virtual orientation in Figure~\ref{fgexlinkinv1} (i), $|\lambda_D(1,2)|=|\lambda_D(2,1)|=2$. In Figure \ref{fgexlinkinv1} (ii), the diagram D with a cut system $P$ and a cut orientation $\cal{O}_c(D,P)$ and an coherent double covering diagram of $(D,P)$  with a cut orientation $\cal{O}_c(D,P)$, $\phi_{\cal{O}_c(D,P)}(D, P)={D_1^1}\cup {D_2^1}\cup{D_1^2}\cup{D_2^2}$ are depicted, where ${D_1^i}\cup {D_2^i}$ correspond to a component $D^i$ of $D$. The absolute value of the sum of the classical crossings between $D_1^i$ (or $D_2^i$) and $D_{1}^j\cup D_{2}^j$ whose overpasses belong to $D_1^i$ (or $D_2^i$) coincides to $|\lambda_D(i,j)|$ for $i \ne j$.

\begin{figure}[h]
\centering{
\includegraphics[width=9.5cm]{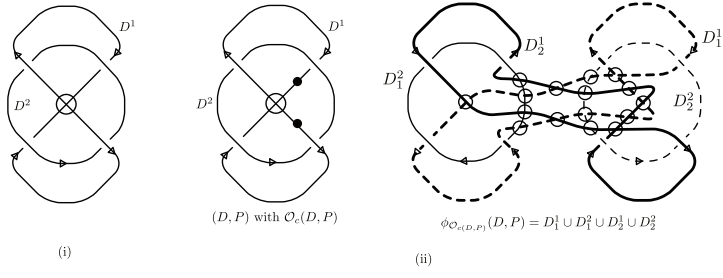}}
%\caption{A virtual link diagram and it's refrection}
\caption{Example : Linking invariant and  coherent double covering of a virtual link diagram with a cut orientation}\label{fgexlinkinv1}
\end{figure}

Let $(D, P)$ be an ordered oriented even  $r$-component virtual link diagram with a cut system $P$ such that $D=D^1\cup \dots \cup D^r$ where $D^i$ is a component of $D$.
For a cut system $P$ of $D$,   we denote a $2$-component sub-diagram $\phi (D, P)|_{D^i}$ of  the coherent double covering diagram $\phi (D, P)$ which corresponds to $D^i$,  by ${D^i_1}\cup{D^i_2}$. 
Let 
$\tilde{\text{lk}}(D^i_k,D^j_l)$ be 
the linking number between ${D}^i_k$ and ${D}^j_l$ for $k,l=1,2$.

The set $\{\tilde{\text{lk}}(D^i_1,D^j_1), \tilde{\text{lk}}(D^i_1,D^j_2)\}$  is denoted by $Q_{ij}(D)$($i\ne j$). For a virtual knot diagram $D$, $\mathrm{lk}_N(D)$ is $\tilde{\text{lk}}(D_1,D_2)$, where $\phi(D,P)=D_1\cup D_2$.

\begin{cor}
The set $Q_{ij}(D)$  is an invariant of ordered oriented even virtual links. Furthermore 
$Q_{ij}(D)$ is equal to the set $\{\tilde{\mathrm{lk}}(D^i_2,D^j_1),\tilde{\mathrm{lk}}(D^i_2,D^j_2)\}$ and $\{\tilde{\mathrm{lk}}(D^i_1,D^j_1),\tilde{\mathrm{lk}}(D^i_2,D^j_1)\}$, $\{\tilde{\mathrm{lk}}(D^i_1,D^j_2),\tilde{\mathrm{lk}}(D^i_2,D^j_2)\}$.
The linking number $\tilde{\mathrm{lk}}(D^i_1,D^i_2)$ is an invariant of an ordered oriented even virtual link.
\end{cor}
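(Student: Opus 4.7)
The plan is to derive the corollary from the two intertwining identities
\begin{equation*}
\tilde{\mathrm{lk}}(D^i_1,D^j_1)=\tilde{\mathrm{lk}}(D^i_2,D^j_2), \qquad \tilde{\mathrm{lk}}(D^i_1,D^j_2)=\tilde{\mathrm{lk}}(D^i_2,D^j_1).
\end{equation*}
Once these are in hand, the three additional set equalities in the statement follow by direct substitution. The only ambiguity in the definition of $Q_{ij}(D)$ is the arbitrary relabelling $D^i_1\leftrightarrow D^i_2$ (or $D^j_1\leftrightarrow D^j_2$) within each lifted pair, and the identities say that either such swap fixes the set $\{\tilde{\mathrm{lk}}(D^i_1,D^j_1),\tilde{\mathrm{lk}}(D^i_1,D^j_2)\}$, so $Q_{ij}(D)$ is well-defined.

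To prove the identities, I would track classical crossings through the construction of $\phi(D,P)$. Since the replacement of Figure~\ref{fgconvert2} introduces only virtual crossings, every classical crossing of $\phi(D,P)$ is either the ``$D$-lift'' of a classical crossing $c$ of $D$ or the ``$D^*$-lift'' of its copy $c^*$ in $D^*$; both lifts have the same sign as $c$ since $D^*$ is obtained from $D$ by a translation of $\mathbb{R}^2$. For a nonself crossing $c$ between $D^i$ and $D^j$ whose two strands lie on arcs $A^i_m\subset D^i$ and $A^j_n\subset D^j$, the $D$-lift sits on $\widetilde{A^i_m}$ and $\widetilde{A^j_n}$, which lie in some pair $(D^i_k,D^j_l)$, while by Lemma~\ref{lem4} the $D^*$-lift sits on $\widetilde{(A^i_m)^*}$ and $\widetilde{(A^j_n)^*}$, in the complementary pair $(D^i_{\bar k},D^j_{\bar l})$ with $\bar k=3-k$. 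Consequently $2\,\tilde{\mathrm{lk}}(D^i_1,D^j_1)$ equals the sum of $\mathrm{sign}(c)$ taken over those $c$ with $(k,l)=(1,1)$ or $(2,2)$, the former contributing via the $D$-lift and the latter via the $D^*$-lift. The same sum computes $2\,\tilde{\mathrm{lk}}(D^i_2,D^j_2)$, proving the first identity; the second is identical with $(1,2)\leftrightarrow(2,1)$.

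Invariance of $Q_{ij}(D)$ then follows from Theorem~\ref{thm2}: if $D\sim D'$, then $\phi(D,P)$ and $\phi(D',P')$ are equivalent oriented virtual links, so their component-wise linking numbers agree up to the pairing ambiguity within each $\{D^i_1,D^i_2\}$, which by the previous paragraph does not affect $Q_{ij}$. The invariance of $\tilde{\mathrm{lk}}(D^i_1,D^i_2)$ is simpler: it is symmetric in its two arguments by the definition of the linking number, so the within-pair relabelling has no effect, and Theorem~\ref{thm2} applies directly.

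The main obstacle is a bookkeeping one: one must verify that the equivalences of Lemmas~\ref{lem1} and~\ref{lem2} used in the proof of Theorem~\ref{thm2} match the unordered pair $\{D^i_1,D^i_2\}$ of $\phi(D,P)$ with the pair $\{(D')^i_1,(D')^i_2\}$ of $\phi(D',P')$, rather than shuffling labels between different indices. This reduces to checking locally that each lifted generalized Reidemeister move and cut-point move preserves the ``same base component'' equivalence relation on the components of the double cover, which is evident from the local pictures used in the proofs of those lemmas.
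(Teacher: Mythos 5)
Your proposal is correct and follows essentially the same route as the paper: the core of both arguments is that each classical crossing between $D^i$ and $D^j$ lifts to two classical crossings of equal sign lying in complementary component pairs $(D^i_{k_1},D^j_{l_1})$ and $(D^i_{k_2},D^j_{l_2})$ with $k_1\ne k_2$, $l_1\ne l_2$, yielding $\tilde{\mathrm{lk}}(D^i_1,D^j_1)=\tilde{\mathrm{lk}}(D^i_2,D^j_2)$ and $\tilde{\mathrm{lk}}(D^i_1,D^j_2)=\tilde{\mathrm{lk}}(D^i_2,D^j_1)$, after which invariance follows from Theorem~\ref{thm2}. You spell out the labelling ambiguity and the invariance step in more detail than the paper does, but the underlying argument is the same.
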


\begin{proof}
Let $(D, P)$ be an ordered oriented even  $r$-component virtual link diagram with a cut system $P$ such that $D=D^1\cup \dots \cup D^r$ where $D^i$ is a component of $D$.
Let $c^{i,j}$ be a classical crossing between $D^i$ and $D^j$ of $(D, P)$. If one calssical crossing $\widetilde{c^{i,j}_{k_1,l_1}}$ in $\phi(D, P)$ corresponding $c^{i,j}$ is between  ${D_{k_1}^i}$ and ${D_{l_1}^j}$ and, then the other is a classical crossing $\widetilde{c^{i,j}_{k_2,l_2}}$ between ${D_{k_2}^i}$ and ${D_{l_2}^j}$ for $k_1\ne k_2$ and $l_1\ne l_2$. The sings of two classical crossings $\widetilde{c^{i,j}_{k_1,l_1}}$ and $\widetilde{c^{i,j}_{k_2,l_2}}$ are the same. 
Then we have $\tilde{\text{lk}}(D^i_1,D^j_1)=\tilde{\text{lk}}(D^i_2,D^j_2)$ and $\tilde{\text{lk}}(D^i_1,D^j_2)=\tilde{\text{lk}}(D^i_2,D^j_1)$. 
\end{proof}

Let  an ordered oriented even virtual link diagrams $D=D^1\cup D^2$ with a cut system ($P=\emptyset$) and  $D'={D^1}'\cup {D^2}'$ with a cut system $P'$ be as in Figure~\ref{fgexlinking1} (i). In Figure~\ref{fgexlinking1} (ii) we show a coherent double covering diagram 
$\phi(D', P')={D_1^1}'\cup {D_2^1}'\cup{D_1^2}'\cup{D_2^2}'$.
We have $\tilde{\text{lk}}(D^1_1,D^1_2)=0$ and $\tilde{\text{lk}}({D^1_1}',{D^1_2}')=2$. So $D$ is not equivalent to $D'$.

\begin{figure}[h]
\centering{
\includegraphics[width=9.5cm]{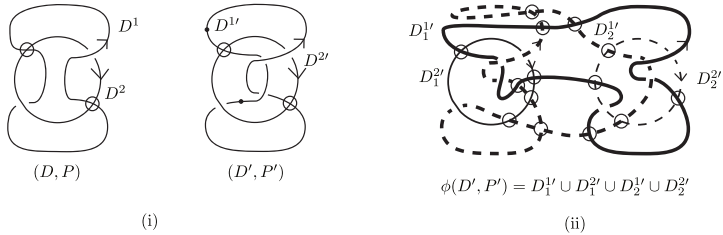}}
%\caption{A virtual link diagram and it's refrection}
\caption{Example : Linking number of  coherent double covering of a virtual link diagram}\label{fgexlinking1}
\end{figure}

\vspace{5pt}
%\vspace{3zw}
\noindent
{\bf Acknowledgement}\\
The author  would like to thank Seiichi Kamada for his useful suggestion.

 \end{document}